\newcommand{\mathmacro}[1]{#1\CustomizeMathJax{#1}}
\newcommand{\GL}{\operatorname{GL}}
\newcommand{\PGL}{\operatorname{PGL}}
\newcommand{\SL}{\operatorname{SL}}
\newcommand{\PSL}{\operatorname{PSL}}
\newcommand{\normal}{\trianglelefteqslant}
\newcommand{\gen}[1]{\langle#1\rangle}
\newcommand{\ep}{\varepsilon}
\newcommand{\F}{\mathbb{F}}
\newtheorem{prop}{Proposition}
\newtheorem{thm}[prop]{Theorem}
\newtheorem{cor}[prop]{Corollary}
\newtheorem{lem}[prop]{Lemma}
\newtheorem{question}[prop]{Question}
\theoremstyle{definition}
\newtheorem{defn}[prop]{Definition}
\newtheorem{example}[prop]{Example}
\theoremstyle{remark}
\newtheorem{rem}[prop]{Remark}
\numberwithin{prop}{section}
\title{The Ingleton inequality holds for metacyclic groups and fails for supersoluble groups}
\author{David A.~Craven}
\date{\today}
\begin{document}
\maketitle

\section{Introduction}

The Ingleton inequality first appeared in matroid theory, where Ingleton proved in 1971 \cite{ingleton1971} that every rank function coming from a representable matroid on four subsets satisfies a particular inequality. Because this inequality is not implied by submodularity, Shannon-type axioms alone, it and various analogues  play a central role in separately linear and non-linear phenomena in a variety of areas of mathematics.

One particularly important area is information theory, where the Shannon entropy of discrete random variables obeys the submodularity relations that matroid rank functions obey. The Ingleton inequality concerns network codes in four random variables, and the part of the (Shannon) entropy region where the Ingleton inequality holds, corresponding to linear network codes, forms a polyhedron. However, there are points in the entropy region where Ingleton's inequality is violated, i.e., that lie outside this polyhedron. (See for example \cite{bostonnan2020} and the references therein for more details.)

It is possible to study these points just using finite group theory, and this is the approach taken in this paper. The statement of the Ingleton inequality is easy, and depends only on four subgroups, $H_1$, $H_2$, $H_3$ and $H_4$ of a finite group $G$.

Formally, let $G$ be a finite group and let $\mathbf H=(H_1,H_2,H_3,H_4)$ be a quadruple of subgroups of $G$. Write $H_{12}$ for the intersection $H_1\cap H_2$ and so on. We call $\mathbf H$ an \emph{Ingleton offender} if the inequality 
\[ |H_1|\cdot|H_2|\cdot|H_{34}|\cdot|H_{123}|\cdot|H_{124}|\geq |H_{12}|\cdot|H_{13}|\cdot|H_{14}|\cdot|H_{23}|\cdot|H_{24}|\]
is not satisfied by the $H_i$. We are normally interested in the case where $G$ is generated by the $H_i$, which we call a \emph{generative Ingleton offender}. If $G$ contains an Ingleton offender, we call $G$ an \emph{Ingleton violator}, and similarly a \emph{generative Ingleton violator} if it contains a generative Ingleton offender.

There are infinitely many generative Ingleton violators, with the first examples discovered being $\PGL_2(p)$ for all $p\geq 5$ \cite{maohassibi2009}. Since then, soluble Ingleton violators have been found \cite{bostonnan2020}, most interestingly $A_4\times A_4$, which shows that the direct product of two groups can be an Ingleton violator despite its two factors not being so. On the other hand, there are no abelian Ingleton violators \cite{chan2007}. It was claimed in \cite{oggierstancu2012} that there were no metacyclic or nilpotent Ingleton violators, but the proof contains an error, which we discuss in Section \ref{sec:primeorder}. We can, however, still prove one of these using a different approach.

\begin{thm}\label{thm:nometacyclic} There are no metacyclic Ingleton violators.
\end{thm}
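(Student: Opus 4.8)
The plan is to use the defining extension of a metacyclic group to split the Ingleton functional into a \emph{kernel} part and a \emph{quotient} part, the first of which is essentially abelian. Since every subgroup of a metacyclic group is again metacyclic, an Ingleton offender may be replaced by a generative one inside the subgroup it generates; so it suffices to fix $G$ metacyclic with $G=\gen{H_1,H_2,H_3,H_4}$ and show that
\[ \operatorname{Ing}(f):=f(1)+f(2)+f(34)+f(123)+f(124)-f(12)-f(13)-f(14)-f(23)-f(24)\ge 0, \]
where $H_S=\bigcap_{i\in S}H_i$ and $f(S)=\log|H_S|$. Fix a cyclic $N\normal G$ with $G/N$ cyclic, and for each $S$ put $A_S=H_S\cap N$ and $B_S=H_SN/N$, so that $|H_S|=|A_S|\cdot|B_S|$ and hence $f=\alpha+\beta$ with $\alpha(S)=\log|A_S|$ and $\beta(S)=\log|B_S|$. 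By linearity, $\operatorname{Ing}(f)=\operatorname{Ing}(\alpha)+\operatorname{Ing}(\beta)$.

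The kernel part is immediate: $A_S=\bigcap_{i\in S}A_i$ really is the intersection of the subgroups $A_i=H_i\cap N$ of the abelian group $N$, so $\alpha$ is the intersection-rank function of four subgroups of an abelian group and the absence of abelian Ingleton violators gives $\operatorname{Ing}(\alpha)\ge 0$. The quotient part is where all the difficulty lies. One has $B_{\{i\}}=H_iN/N$, but for $|S|\ge 2$ only the inclusion $B_S\le\bigcap_{i\in S}H_iN/N$ holds, and it is typically strict, because $(H_i\cap H_j)N$ can be a proper subgroup of $H_iN\cap H_jN$. Thus $\beta$ is not an intersection-rank function, and indeed $\operatorname{Ing}(\beta)$ can be strictly negative (for example in $D_8$, taking $H_1=H_2$ a Klein four-group and $H_3,H_4$ generated by two of its reflections gives $\operatorname{Ing}(\beta)=-\log 2$). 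So the two summands cannot be controlled independently.

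To organise the interaction I would next reduce to the case where $N$ and $G/N$ are cyclic $p$-groups, by decomposing the two cyclic layers into their primary components; this splits $\alpha=\sum_p\alpha_p$ and $\beta=\sum_p\beta_p$ and, after checking that the relevant intersections are compatible with this localisation, reduces the theorem to metacyclic $p$-groups. The gain is that the images $H_iN/N$ and all the $B_S$ then lie in the chain of subgroups of a cyclic $p$-group, so they are totally ordered and the whole configuration is described by a handful of integer levels. In this chain situation the crux becomes a Zassenhaus/Dedekind-type index computation: the deficit $[H_iN\cap H_jN:(H_i\cap H_j)N]$ responsible for a negative contribution to $\operatorname{Ing}(\beta)$ is paid for, via the exact sequence relating $H_i\cap H_jN$ to $(H_i\cap H_j)(H_i\cap N)$, by a matching surplus among the intersections $A_i\cap A_j$ inside $N$. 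Tracking these indices along the chain should yield $\operatorname{Ing}(\alpha)\ge-\operatorname{Ing}(\beta)$, and hence $\operatorname{Ing}(f)\ge 0$; in the $D_8$ example above, for instance, the surplus $\operatorname{Ing}(\alpha)=\log 2$ exactly cancels the deficit.

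I expect this compensation step to be the main obstacle, and it is precisely where the hypothesis \emph{metacyclic} (rather than merely supersoluble) must enter: the supersoluble violators promised by the title show that no argument using submodularity alone can work, so the proof has to exploit the fact that both $N$ and $G/N$ are cyclic, i.e.\ that their subgroup lattices become chains after localisation. The remaining risk is bookkeeping: once the compensating identity is in place, the inequality should reduce to a finite check on the levels of the four images in the chain together with the $p$-adic valuations of the $|A_i|$, which can be shortened using the symmetry of $\operatorname{Ing}$ under swapping $1\leftrightarrow 2$ and $3\leftrightarrow 4$.
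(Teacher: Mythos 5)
There is a genuine gap, and it sits exactly where you predict it will: the ``compensation step'' is never carried out, and without it the argument is a reformulation of the theorem rather than a proof of it. Writing $f=\alpha+\beta$ with $\alpha(S)=\log|H_S\cap N|$ and $\beta(S)=\log|H_SN/N|$ is a legitimate decomposition, and your observation that $\operatorname{Ing}(\alpha)\ge 0$ (since the $A_i=H_i\cap N$ are subgroups of an abelian group) is correct --- but it is a lower bound pointing the wrong way. What you need is $\operatorname{Ing}(\alpha)+\operatorname{Ing}(\beta)\ge 0$, and since $\operatorname{Ing}(\beta)$ can be negative (as your own $D_8$ example shows), the inequality $\operatorname{Ing}(\alpha)\ge-\operatorname{Ing}(\beta)$ is precisely equivalent to the theorem; nothing has been gained. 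The proposed ``Zassenhaus/Dedekind-type index computation'' that is supposed to match the deficit $[H_iN\cap H_jN:(H_i\cap H_j)N]$ against a surplus in the $A_i\cap A_j$ is not exhibited, and it cannot be routine: in the critical case $H_i\cap N=1$ for all $i$ one has $\alpha\equiv 0$ and $\beta=f$, so the decomposition is vacuous and there is no kernel surplus available to pay for anything. The reduction to metacyclic $p$-groups is also not justified at the level of groups (the functionals $\alpha$ and $\beta$ localise because $N$ and $G/N$ are cyclic, but the quadruple $(H_1,\dots,H_4)$ does not decompose into commuting primary pieces, so ``reduces the theorem to metacyclic $p$-groups'' needs an argument). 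Note too that this style of tracking index changes across the extension is essentially the strategy of \cite{oggierstancu2012}, whose gap is discussed in Section \ref{sec:primeorder}: the failure mode $N\le H_3H_4$ but $N\not\le H_iH_j$ for other pairs is exactly the uncontrolled interaction between your $\alpha$ and $\beta$.

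The paper's proof avoids all of this and is much shorter. Its two ingredients are Corollary \ref{cor:H1notcyclic} ($H_1$ cannot be cyclic in an Ingleton offender, proved via the criterion $H_{12}=H_{123}H_{124}$ of Proposition \ref{prop:ingoffenderfact}) and Theorem \ref{thm:removenormal}. If $K\normal G$ with $K$ and $G/K$ cyclic, then $H_1/(H_1\cap K)$ embeds in the cyclic group $G/K$, so $H_1\cap K\ne 1$; as every subgroup of the cyclic normal subgroup $K$ is normal in $G$, the subgroup $H_1$ contains a non-trivial normal subgroup $N$ of $G$. Theorem \ref{thm:removenormal} then forces the images of the $H_i$ in $G/N$ to be an Ingleton offender, contradicting induction on $|G|$. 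Your proposal uses neither of these ingredients; if you want to salvage your approach you would at minimum need to import the non-cyclicity of $H_1$ to dispose of the case $H_i\cap N=1$ for all $i$, and then you are most of the way to the paper's argument anyway.
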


On the other hand, we do have the following result, which closes the gap still further in the negative direction. Recall that a group is \emph{supersoluble} (or supersolvable) if it has a chain of subgroups
\[ 1=G_0\leq G_1\leq \cdots \leq G_r=G\]
where each $G_i$ is normal in $G$ and $G_i/G_{i-1}$ is cyclic. Every nilpotent group is supersoluble, but there are others, like $S_3$ for example. Notice that $A_4$ is not supersoluble, so supersoluble is quite close to nilpotent.

\begin{thm} There are infinitely many supersoluble generative Ingleton violators, specifically semidirect products of extraspecial groups $p^{1+2}_+$ by a cyclic group of order $p-1$.
\end{thm}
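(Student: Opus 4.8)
The plan is to realise the group completely concretely and then split the argument into a routine structural half (supersolubility) and a delicate combinatorial half (the offender). Fix an odd prime $p$ and let $P=p^{1+2}_+=\gen{a,b,c}$ with $a^p=b^p=c^p=1$, $[a,b]=c$ and $c$ central, and let $C=\gen\theta\cong C_{p-1}$ act faithfully and diagonalisably, for definiteness by $a\mapsto a^s,\ b\mapsto b^{s^{-1}}$ with $s$ a primitive root modulo $p$ (so $c=[a,b]$ is fixed and $Z:=\gen c=Z(P)$ remains central in $G:=P\rtimes C$); concretely $G$ is the subgroup $U\rtimes T$ of the Borel of $\GL_3(p)$, with $U$ unitriangular and $T=\{\operatorname{diag}(t,1,t)\}$. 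Supersolubility I would dispose of first: the chain $1\normal Z\normal\gen{a,c}\normal P\normal G$ consists of subgroups normal in $G$ — each $P$-term is $\theta$-invariant because $a,c$ are eigenvectors, while $P\normal G$ and $Z=Z(P)$ are automatic — with successive quotients $C_p,C_p,C_p,C_{p-1}$, all cyclic. Hence $G$ is supersoluble, and it is visibly not nilpotent since $C$ acts nontrivially, matching the remark that supersoluble lies strictly between nilpotent and soluble. The precise weights can be adjusted later to suit the offender; the flag above gives supersolubility for \emph{any} faithful diagonal action.

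Second, I would exhibit four subgroups $H_1,\dots,H_4$ generating $G$ and check the inequality fails. By Schur--Zassenhaus every subgroup is $(H\cap P)\rtimes D$ with $D$ a cyclic $p'$-group inside a conjugate of $C$, so the natural candidates are $Q\rtimes C^g$ with $Q\le P$ and $g\in P$; generation is cheap, since already $\gen{a,\theta}$ and $\gen{b,\theta}$ generate $G$. The computation of the ten quantities $|H_i|,|H_{ij}|,|H_{ijk}|$ is then mechanical. Writing every element uniquely as $\theta^{\ell}b^{n}a^{m}c^{k}$ and using $a^{m}b^{n}=b^{n}a^{m}c^{mn}$ together with $\theta^{-\ell}a\theta^{\ell}=a^{s^{-\ell}}$ and $\theta^{-\ell}b\theta^{\ell}=b^{s^{\ell}}$, each subgroup becomes a set of linear constraints on $(\ell,n,m,k)$ over $\F_p$ (with $\ell$ modulo $p-1$), and every intersection order is read off by solving the combined constraints. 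The target then reduces to comparing two monomials in $p$ and $p-1$, and the symmetries $1\leftrightarrow2$, $3\leftrightarrow4$ and $\{1,2\}\leftrightarrow\{3,4\}$ of the Ingleton expression cut the bookkeeping substantially.

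The hard part is choosing the quadruple. The obstruction is that the central commutator $c=[a,b]$ tends to force $Z$ into most intersections, where its contribution \emph{cancels} between the two sides; consequently the symmetric split configurations — for instance $\gen{a,c}\rtimes C,\ \gen{b,c}\rtimes C$ together with the torus conjugates $\gen{a,c}\rtimes C^{b},\ \gen{b,c}\rtimes C^{a}$ — sit \emph{exactly} on the Ingleton boundary and give equality, not a violation. To cross strictly one must break this symmetry: I expect the winning configuration to mix subgroup orders and to use at least one factor avoiding $Z$, chosen so that $H_{34}$ and the two triple intersections $H_{123},H_{124}$ are forced to be small (of $p'$-type, with the $c$-coordinate pinned by a conjugated torus such as $\gen{ac}\rtimes C^{b}$) while the four off-diagonal intersections $H_{13},H_{14},H_{23},H_{24}$ stay large. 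Pinning down such a quadruple, and verifying that the attached products $(H_i\cap H_3)(H_i\cap H_4)$ are tight enough, is the genuine crux; it can be guided either by the geometry of the $\GL_3(p)$ realisation or by the analogy with the $\PGL_2(p)$ offenders of \cite{maohassibi2009}, and located in practice by a short machine search for small $p$ and then written down uniformly.

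Finally, since both halves go through for every sufficiently large prime $p$ and the groups have pairwise distinct orders $p^{3}(p-1)$, this yields infinitely many pairwise non-isomorphic supersoluble generative Ingleton violators, as required.
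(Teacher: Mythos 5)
Your structural half is fine and consistent with the paper: the group is the same one (the paper realises it as $\gen{u_1,u_4,t}$ inside the Borel of $\SL_3(p)$, with $t$ the diagonal torus element and $u_1,u_4$ the two simple root subgroups), and your normal chain $1\normal Z\normal\gen{a,c}\normal P\normal G$ with cyclic quotients is a correct, if routine, verification of supersolubility that the paper does not even bother to spell out. The problem is that the entire content of the theorem lies in the half you leave open. You never produce the quadruple $(H_1,H_2,H_3,H_4)$, never compute the ten orders, and never verify that the inequality fails; you explicitly defer this to ``a short machine search for small $p$ and then written down uniformly.'' That is a plan, not a proof, and the step you defer is exactly where all the work is. Your heuristic claims along the way (e.g.\ that the symmetric configuration $\gen{a,c}\rtimes C$, $\gen{b,c}\rtimes C$, $\gen{a,c}\rtimes C^b$, $\gen{b,c}\rtimes C^a$ sits ``exactly on the Ingleton boundary'') are likewise unverified and do nothing to close the gap.

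For comparison, the paper's quadruple consists of four Frobenius groups of order $p(p-1)$: $H_1=\gen{u_1,t}$ and $H_4=\gen{u_4,t}$ are the two root Borel subgroups, while $H_2=\gen{u_2,h_2}$ and $H_3=\gen{u_3,h_3}$ use unipotent elements whose $(1,3)$-entries are tuned to the specific values $\zeta(\zeta-2)/(1-\zeta)$ and $1/(1-\zeta)$ so that $h_2=tu_4$ and $h_3=u_1t$ normalise $\gen{u_2}$ and $\gen{u_3}$ respectively. The resulting arithmetic is $|H_{13}|=|H_{14}|=|H_{23}|=|H_{24}|=p-1$, $|H_{34}|=1$, $|H_{12}|=2$ (generated by an explicit involution), and $H_{123}=H_{124}=1$, so the inequality reads $p^2(p-1)^2\geq 2(p-1)^4$, which fails for all $p\geq 5$. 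Your qualitative prediction of the shape of the winning configuration (one pairwise intersection forced tiny, the four cross intersections large, the triple intersections trivial) does match this, but identifying the precise unipotent parts and verifying the seven nontrivial intersection computations is the crux you acknowledge you have not done. As written, the proposal does not establish the theorem.
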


We cannot prove that there are no nilpotent Ingleton violators in this article, but we do give some tight restrictions on the structure of a nilpotent Ingleton violator of minimal order in Section \ref{sec:primeorder}. We prove the following result.

\begin{thm}\label{thm:removenormal} Let $G$ be a finite group and suppose that $\mathbf H=(H_1,H_2,H_3,H_4)$ is an Ingleton offender in $G$. If $N$ is a normal subgroup of $G$ then either $H_i\cap N=1$ for all $1\leq i\leq 4$, or the images of the $H_i$ in $G/N$ form an Ingleton offender, i.e.,  $(H_1N/N,H_2N/N,H_3N/N,H_4N/N)$ is an Ingleton offender in $G/N$.
\end{thm}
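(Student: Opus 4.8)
The plan is to pass to the language of the index (entropy) function and then eliminate the quotient. For $S\subseteq\{1,2,3,4\}$ write $H_S=\bigcap_{i\in S}H_i$ (so $H_\emptyset=G$) and $f(S)=\log[G:H_S]$. Taking logarithms in the defining inequality, $\mathbf H$ is an offender precisely when
\[ f(12)+f(13)+f(14)+f(23)+f(24) \;<\; f(1)+f(2)+f(34)+f(123)+f(124); \]
I abbreviate the right-hand minus left-hand side by $\operatorname{Ing}(f)$, so that offenders are exactly the quadruples with $\operatorname{Ing}(f)>0$. Since, as noted in the introduction, Ingleton's inequality is not a consequence of the submodular (Shannon-type) inequalities, any argument must use more than submodularity; I keep this in mind, as the naive bounds on intersection orders all point the wrong way.

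The first step is a reduction that removes $N$ entirely. Each $H_iN$ contains $N$, so the lattice isomorphism between subgroups of $G/N$ and subgroups of $G$ containing $N$ preserves intersections and gives
\[ \bigcap_{i\in S}\bigl(H_iN/N\bigr)=\Bigl(\bigcap_{i\in S}H_iN\Bigr)\big/N. \]
Writing $K_i=H_iN$ and $K_S=\bigcap_{i\in S}K_i$, every intersection order occurring in the Ingleton product for the images equals $|K_S|/|N|$. As each side of the offender inequality has five factors, the ten factors of $|N|$ cancel, and the images $(H_1N/N,\dots,H_4N/N)$ form an offender in $G/N$ if and only if the enlarged quadruple $(H_1N,H_2N,H_3N,H_4N)$ forms an offender in $G$. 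Theorem~\ref{thm:removenormal} is therefore equivalent to the internal statement: if $\mathbf H$ is an offender and $H_i\cap N\neq 1$ for some $i$, then $(K_1,K_2,K_3,K_4)$ is again an offender.

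To compare the two quadruples I introduce the nonnegative defect $e(S)=\log[K_S:H_S]$, so that the index function of the enlarged quadruple is $f-e$ and $\operatorname{Ing}(f-e)=\operatorname{Ing}(f)-\operatorname{Ing}(e)$. Because $\operatorname{Ing}(f)>0$ by hypothesis, it suffices to prove $\operatorname{Ing}(e)\le 0$, that is,
\[ e(12)+e(13)+e(14)+e(23)+e(24) \;\ge\; e(1)+e(2)+e(34)+e(123)+e(124). \]
I would split $e(S)=\log[N:H_S\cap N]+\log[K_S:H_SN]$: the first summand is the index function on $N$ of the subgroups $N_i=H_i\cap N$ (note $\bigcap_{i\in S}N_i=H_S\cap N$), and the second, which vanishes for $|S|\le 1$, measures the failure of intersection to commute with the quotient, since $\bigcap_i H_iN$ may strictly exceed $\bigl(\bigcap_i H_i\bigr)N$.

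Controlling this second, intersection-defect summand is the main obstacle, and it is here that the hypothesis $H_i\cap N\neq 1$ for some $i$ enters: when every $H_i\cap N=1$ the first summand contributes nothing to $\operatorname{Ing}(e)$, the displayed inequality can genuinely fail through the defect terms, and the enlarged quadruple need not be an offender, which is precisely why that case is set aside as the first alternative. To prove $\operatorname{Ing}(e)\le 0$ in general I would first reduce, through a chief series of $G$ and the two-step quotient $G\to G/M\to G/N$, to the case that $N$ is a minimal normal subgroup, hence elementary abelian or a direct product of isomorphic simple groups; I would then bound the two summands of $\operatorname{Ing}(e)$ against one another in that base case, the nontrivial intersections $N_i$ supplying the slack that forces the pairwise terms to dominate. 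I expect the elementary abelian case to be the technical heart of the argument.
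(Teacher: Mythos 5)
Your opening reductions are sound and agree with the first step of the paper's argument: the lattice isomorphism theorem shows that the images form an offender in $G/N$ if and only if $(H_1N,H_2N,H_3N,H_4N)$ is an offender in $G$ (the ten factors of $|N|$ cancel), and the problem becomes the internal inequality $\operatorname{Ing}(e)\le 0$ for your defect function $e$. From that point on, however, nothing is proved: the decisive inequality is deferred to a case analysis you do not carry out, so this is a plan rather than a proof. More seriously, the inequality you are aiming for is false under the hypothesis you are working with, namely that $H_i\cap N\neq 1$ for \emph{some} $i$. Take $G_0=5^{1+2}_+\rtimes 4$ from Section \ref{sec:supersoluble} with its offender $(H_1,\dots,H_4)$ and the normal subgroup $N_0$ of order $5$; the paper records that the images in $G_0/N_0$ are \emph{not} an offender, i.e.\ $\operatorname{Ing}(e_0)\ge\operatorname{Ing}(f_0)>0$ there. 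Now set $G=G_0\times\langle z\rangle$ with $z$ of order $2$, put $H_i'=H_i\times\langle z\rangle$ and $N=N_0\times\langle z\rangle$. Every intersection $H'_S=H_S\times\langle z\rangle$ gains exactly a factor of $2$, so $(H_1',\dots,H_4')$ is still an offender; every $H_i'\cap N$ contains $z$, hence is non-trivial; yet the image quadruple in $G/N\cong G_0/N_0$ is the same non-offender as before. So no amount of work on the elementary abelian base case will close your gap. The dichotomy must be read with ``$N\not\leq H_i$ for all $i$'' in place of ``$H_i\cap N=1$ for all $i$''; that weaker version is what the paper's proof actually establishes (its first line assumes $N$ is \emph{contained in} one of the $H_i$), it is what the formal definition of irreducibility in Section \ref{sec:primeorder} encodes, and it is all that the metacyclic application requires.

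The containment $N\leq H_i$ for some $i$ is precisely the leverage your all-at-once decomposition discards, and it is where the paper's proof does its work. The paper enlarges one subgroup at a time: if, say, $N\leq H_2$, it replaces $H_1$ by $NH_1$, and Lemma \ref{lem:maxlp} gives the \emph{equality} $|NH_1\cap H_2|/|H_1\cap H_2|=|N|/|N\cap H_1|$ exactly because $N\leq H_2$, so the growth of $|H_1|$ on the left is matched by the growth of $|H_{12}|$ on the right, while the remaining affected terms pair off via Lemma \ref{lem:indexreduction} with left-hand growth bounded by right-hand growth. Iterating until every $H_i$ contains $N$ and then quotienting finishes the argument. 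If you want to keep your framework, prove $\operatorname{Ing}(e)\le 0$ under the hypothesis $N\leq H_i$ for some $i$, and organize the estimate by the one-subgroup-at-a-time filtration rather than by a chief series of $N$.
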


Note that the existence of supersoluble Ingleton violators implies that the option $H_i\cap N=1$ in the theorem above can, and does, occur. Because of this result, we say that a generative Ingleton offender $(H_1,H_2,H_3,H_4)$ in a finite group $G$ is \emph{irreducible} if any normal subgroup of $G$ intersects all $H_i$ trivially. If $N$ is a normal subgroup of a finite group $G$ and $G/N$ possesses an Ingleton offender, then by taking full preimages of the subgroups we obtain an Ingleton offender in $G$. There might be others as well, including irreducible ones, that nevertheless still have image an Ingleton offender in $G/N$. We therefore restrict things further and call an Ingleton offender \emph{indomitable} if the image of it modulo any non-trivial normal subgroup is not an Ingleton offender. There are examples of irreducible Ingleton offenders that are not indomitable, see Section \ref{sec:smallorder}. Of course, if $G/N$ is not an Ingleton violator for each $1\neq N\normal G$ then every generative Ingleton offender in $G$ is indomitable. A minimal example of a nilpotent Ingleton violator would possess only indomitable Ingleton offenders.

\medskip

The final section of this article contains an exhaustive computer search, which finds all irreducible Ingleton offenders in finite groups of order less than $1023$. (This includes checking that there are no Ingleton violators of order $512$.) 

\begin{thm} If $|G|<1024$ and $G$ is an Ingleton violator, then $G$ is known.
\end{thm}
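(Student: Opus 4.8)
The plan is to reduce the classification to a structured exhaustive search over the SmallGroups library, arranged so that Theorem~\ref{thm:removenormal} carries most of the weight. The starting observation is that the implication in that theorem is reversible at the level of quotients: if $1\neq N\normal G$ and $(\bar H_1,\bar H_2,\bar H_3,\bar H_4)$ is an Ingleton offender in $G/N$, then lifting to the full preimages $H_i$ (so that $N\leq H_i$) gives $|H_i|=|N|\cdot|\bar H_i|$ and, since intersections of full preimages are full preimages of intersections, $|H_{ij}|=|N|\cdot|\bar H_{ij}|$ and $|H_{ijk}|=|N|\cdot|\bar H_{ijk}|$. Both sides of the inequality are products of five such orders, so each is multiplied by exactly $|N|^5$ and $(H_1,H_2,H_3,H_4)$ is an offender in $G$. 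Thus every violator quotient lifts to a violator, and combining this with Theorem~\ref{thm:removenormal} in the forward direction yields a clean dichotomy: \emph{$G$ is an Ingleton violator if and only if it admits an irreducible offender or has a proper non-trivial quotient that is itself a violator.}

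This dichotomy drives an induction on $|G|$. Having classified all violators of smaller order, for a given $G$ I would first decide the second disjunct. Because a violator quotient by any $1\neq M\normal G$ lifts, via the preceding lifting argument, to a violator quotient by a minimal normal subgroup contained in $M$, it suffices to test the finitely many quotients $G/N$ with $N$ minimal normal, looking each up in the already-computed table of violators; if one is a violator, $G$ is a violator and we are done. Otherwise no proper quotient is a violator, so by Theorem~\ref{thm:removenormal} every offender in $G$ must be irreducible, and it remains only to search for irreducible offenders directly. That search runs over quadruples $(H_1,H_2,H_3,H_4)$ of subgroups, cut down by: (a) simultaneous conjugacy --- conjugating all four subgroups by a common element preserves every order in the inequality, so $H_1$ may be restricted to one representative per conjugacy class of subgroups; (b) the symmetries $H_1\leftrightarrow H_2$ and $H_3\leftrightarrow H_4$ of the inequality; (c) the degenerate cases $H_i\in\{1,G\}$, which (by monotonicity and submodularity of $S\mapsto\log[G:\bigcap_{i\in S}H_i]$) never yield an offender; and (d) irreducibility, which forces $H_i\cap N=1$ for every minimal normal $N$ and so discards most subgroups before any intersection is formed. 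Each surviving quadruple is then a single comparison of two products, each of five subgroup orders.

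The dominant obstacle is order $512=2^9$, for which the SmallGroups library contains $10{,}494{,}213$ groups --- more than all smaller orders together --- with order $768=2^8\cdot 3$ ($1{,}090{,}235$ groups) a distant second, so the per-group test must be extremely cheap. Here the structure of $2$-groups and the irreducibility reduction combine decisively. Assuming, as the induction for smaller orders confirms, that no $2$-group below order $512$ is a violator, the quotient-inheritance branch is vacuous for $2$-groups of order $512$, leaving only the irreducible search. In a $2$-group every minimal normal subgroup is a central subgroup of order $2$, so an irreducible offender demands $H_i\cap\Omega_1(Z(G))=1$ for all $i$; the large centres and constrained subgroup lattices of these groups, together with the structural constraints on minimal nilpotent violators from Section~\ref{sec:primeorder}, eliminate the overwhelming majority of candidates without ever enumerating quadruples. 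Only the comparatively few groups passing every structural filter are handed to the full quadruple search, which verifies that none produces an offender.

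Finally I would assemble the irreducible offenders found at every order together with the inherited violators, producing the complete list of Ingleton violators below $1024$, and match it against the known constructions: $\PGL_2(p)$ for $p=5,7,9$ (orders $120$, $336$, $720$), the supersoluble groups $p^{1+2}_+\rtimes C_{p-1}$ for $p=3,5$ (orders $54$ and $500$), the group $A_4\times A_4$, and the soluble examples of \cite{bostonnan2020}, confirming that every violator of order below $1024$ either lies in these families or is exhibited explicitly by the search. The whole argument is underpinned by the if-and-only-if dichotomy, so the principal risk is not mathematical but computational --- coding errors or resource limits at order $512$ --- which I would mitigate by cross-checking the two halves of the dichotomy on overlapping cases and by re-running the order-$512$ search with an independent implementation.
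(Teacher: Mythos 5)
Your overall strategy is the same as the paper's: the theorem is proved by an exhaustive computer search over the SmallGroups library, organized around Theorem~\ref{thm:removenormal} together with the lifting of offenders from quotients, so that only the irreducible offenders need to be found directly and everything else is obtained as a preimage. Your ``dichotomy'' is exactly how the paper organizes its data (Tables~\ref{tab:indingvios} and~\ref{tab:irringvios} list the irreducible violators; the remaining violators are lifts from quotients), and your justifications of the lifting step and of the restriction to minimal normal subgroups are correct. The substantive difference is that you work only on the quotient side, whereas the paper also runs an induction over \emph{subgroups}: it introduces minimal Ingleton violators (no proper subgroup is a violator) and proves, in the Proposition of Section~\ref{sec:smallorder}, that an offender in a minimal violator satisfies $G=\gen{H_i,H_j}$ for all $i\neq j$, and $G=\gen{H_k,H_{ij}}$ for $\{i,j\}\neq\{3,4\}$. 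These generation conditions are what make the search feasible at the hard composite orders --- the paper notes that only a couple of the $1090235$ groups of order $768$ survive them --- and they are the main lever your proposal lacks: for a group with a single small minimal normal subgroup, your filter ``$H_i\cap N=1$ for $N$ minimal normal'' prunes almost nothing, and a raw search over quadruples of subgroups is not realistic at these orders.

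Two further concerns. First, your treatment of order $512$ is asserted rather than argued: since only non-trivial \emph{normal} subgroups of a $2$-group must meet the centre, the condition $H_i\cap\Omega_1(Z(G))=1$ is far from decisive for arbitrary subgroups, and ``constrained subgroup lattices \ldots eliminate the overwhelming majority of candidates without ever enumerating quadruples'' is precisely the computational crux that would need to be substantiated for the proof to stand. Second, a factual slip in your reconciliation step: $p^{1+2}_+\rtimes C_{p-1}$ is a violator only for $p\geq 5$ (the required inequality $p^2\leq 2(p-1)^2$ fails at $p=3$, and no group of order $54$ appears in the tables), and the list of ``known constructions'' you match against omits most of what the search actually finds ($3\times\PSL_2(7)$, the Borel subgroups, $3^3\rtimes A_4$, etc.), though this last point is bookkeeping rather than logic.
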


As a result of this search, we can ask another question, beyond the potential existence of nilpotent Ingleton violators.

\begin{question} Is there an Ingleton violator of odd order?
\end{question}

There are Ingleton violators with cyclic Sylow $2$-subgroup, including groups with twice odd order, among the supersoluble groups in Section \ref{sec:supersoluble}, but no group of odd order so far. Indeed, there are no odd-order Ingleton violators of order up to $2000$, by an exhaustive computer search. The prime $2$ seems to play an important role, but one that is not yet clear.

\medskip

The structure of the paper is as follows. Section \ref{sec:prelims} collects some preliminary lemmas, and proves a general condition: if $H_{12}=H_{123}H_{124}$ then the Ingleton inequality always holds for $(H_1,H_2,H_3,H_4)$. This is used to prove that $H_1$ and $H_2$ are non-cyclic in any Ingleton offender. In this section we also prove the known general conditions for a quadruple to not be an Ingleton offender, which are scattered in the literature and sometimes in different language to finite group theory. Section \ref{sec:primeorder} proves Theorem \ref{thm:removenormal}, and together with the fact that $H_1$ cannot be cyclic in an Ingleton offender, proves Theorem \ref{thm:nometacyclic}. Section \ref{sec:supersoluble} constructs the supersoluble Ingleton violators, and Section \ref{sec:smallorder} gives details about all groups of order at most $1023$ that posses irreducible and indomitable Ingleton offenders.

\medskip

The proof here that metacyclic groups are not Ingleton violators was created more than a decade ago, but I did not publish it as I thought it was already known. I would like to thank Fr\'ed\'erique Oggier for alerting me to the fact that the original proof in \cite{oggierstancu2012} has a gap. I would like to thank Chris Parker for suggesting to look at all primes $p$ when I showed him the supersoluble example $5^{1+2}_+\rtimes 4$, which led to the results in Section \ref{sec:supersoluble}.

The calculations here were all made using Magma \cite{magma} and the author includes full code and a database of Ingleton offenders as auxiliary materials with this paper.

\section{Preliminary lemmas and general conditions}
\label{sec:prelims}

In this section we collect together various results on intersections of subgroups that we will need to analyse how the intersection of two subgroups changes as you modify one of the two subgroups. We then move on to proving some general statements about Ingleton offenders, many of which have appeared in the literature before but scattered, and some with only proofs using information-theoretic methods rather than group theory. We end by proving that in an Ingleton offender, neither $H_1$ nor $H_2$ can be cyclic, a crucial tool for proving Theorem \ref{thm:nometacyclic} and for case reduction when engaging in a computer search.

The order formula $|AB|=|A|\cdot |B|/|A\cap B|$ immediately gives us the first lemma.

\begin{lem}\label{lem:orderintersection} If $A$ and $B$ are subgroups of a finite group $G$ then $|G|\cdot |A\cap B|\geq |A|\cdot |B|$.
\end{lem}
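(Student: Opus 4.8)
The plan is to read off the inequality directly from the product-set order formula quoted just before the statement, rearranged into the claimed form. First I would consider the product set $AB=\{ab : a\in A,\, b\in B\}$. This is a subset of $G$ but, crucially, it need not be a subgroup, so one should not invoke Lagrange's theorem; instead one works with its cardinality as a plain subset. Since $AB\subseteq G$, we immediately have $|AB|\leq |G|$.

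Next I would justify the counting identity $|AB|=|A|\cdot|B|/|A\cap B|$ for the product set. The point is that this holds whether or not $AB$ is a subgroup: the map $A\times B\to AB$ sending $(a,b)\mapsto ab$ is surjective, and each element $g\in AB$ has exactly $|A\cap B|$ preimages, because if $g=ab=a'b'$ then $a^{-1}a'=b(b')^{-1}\in A\cap B$, giving a free transitive action of $A\cap B$ on the fibre. Hence $|A|\cdot|B|=|AB|\cdot|A\cap B|$.

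Combining the two observations gives $|A|\cdot|B|=|AB|\cdot|A\cap B|\leq |G|\cdot|A\cap B|$, which is exactly the stated inequality. The only conceptual point to be careful about, which is the closest thing to an obstacle in an otherwise immediate argument, is precisely that $AB$ may fail to be a subgroup; so I would make sure the cardinality bound $|AB|\leq|G|$ and the fibre-counting identity are both phrased purely set-theoretically rather than appealing to any subgroup structure on $AB$.
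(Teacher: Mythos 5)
Your proof is correct and follows the same route as the paper, which simply cites the order formula $|AB|=|A|\cdot|B|/|A\cap B|$ together with $AB\subseteq G$ and treats the conclusion as immediate. Your added care in verifying the order formula set-theoretically (since $AB$ need not be a subgroup) is exactly the right justification for the step the paper leaves implicit.
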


We now give a few lemmas controlling intersections, and considering intersections of the form $AB\cap C$.

\begin{lem}\label{lem:indexreduction} Let $A$, $B$ and $C$ be subgroups of a group $G$, with $A\geq B$. We have that $|A:A\cap C|\geq |B:B\cap C|$, or equivalently $|A|/|B|\geq |A\cap C|/|B\cap C|$.
\end{lem}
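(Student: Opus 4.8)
The plan is to reduce this to Lemma \ref{lem:orderintersection} by working inside the subgroup $A$ rather than inside the whole of $G$. First I would note that the two stated inequalities are genuinely equivalent: since $B\cap C\leq A\cap C$ and $|X:Y|=|X|/|Y|$ for a subgroup $Y\leq X$ of a finite group, the inequality $|A:A\cap C|\geq |B:B\cap C|$ reads $|A|/|A\cap C|\geq |B|/|B\cap C|$, which rearranges at once to $|A|/|B|\geq |A\cap C|/|B\cap C|$. So it suffices to establish either form.

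Next I would apply Lemma \ref{lem:orderintersection} with the ambient group taken to be $A$ and the two subgroups taken to be $B$ and $A\cap C$; both are genuinely subgroups of $A$, the former because $B\leq A$ by hypothesis and the latter trivially. The lemma then yields $|A|\cdot |B\cap(A\cap C)|\geq |B|\cdot |A\cap C|$. The one identity that makes everything fit is $B\cap(A\cap C)=B\cap C$, which holds precisely because $B\leq A$ forces $B\cap A=B$. Substituting this gives $|A|\cdot|B\cap C|\geq |B|\cdot|A\cap C|$, that is, the second form of the claim, and hence the first.

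There is essentially no obstacle here beyond spotting the right relativisation: the content is entirely in choosing $A$ as the ambient group and recognising the identity $B\cap(A\cap C)=B\cap C$. If one wanted to avoid the finiteness assumption inherited from Lemma \ref{lem:orderintersection}, so as to cover indices in infinite groups as well, I would instead argue directly with cosets: the assignment $b(B\cap C)\mapsto b(A\cap C)$ is a well-defined injection from the left cosets of $B\cap C$ in $B$ into the left cosets of $A\cap C$ in $A$, with both well-definedness and injectivity following from $B\cap(A\cap C)=B\cap C$. This immediately gives $|A:A\cap C|\geq |B:B\cap C|$.
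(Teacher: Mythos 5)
Your proof is correct, but it takes a slightly different route from the paper's. The paper compares the two product sets $BC\subseteq AC$ in the ambient group $G$: by the order formula $|AC|/|C|=|A|/|A\cap C|$ and $|BC|/|C|=|B|/|B\cap C|$, and the containment gives the inequality directly. You instead relativise Lemma \ref{lem:orderintersection} to the subgroup $A$, applying it to the subgroups $B$ and $A\cap C$ of $A$ and using $B\cap(A\cap C)=B\cap C$; unwinding, this amounts to bounding the single product set $B(A\cap C)$ by $|A|$ rather than comparing $BC$ with $AC$. Both arguments rest on the same order formula and are equally short; yours has the small advantage of reusing the previous lemma verbatim rather than re-deriving from the formula, and your supplementary coset-injection argument ($b(B\cap C)\mapsto b(A\cap C)$) is also correct and establishes the index form $|A:A\cap C|\geq |B:B\cap C|$ without any finiteness hypothesis, which the paper's version does not do. For the purposes of this paper only the finite case is needed, so the two proofs are interchangeable.
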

\begin{proof} Since $A\geq B$ we have that $|AC|\geq |BC|$, so certainly $|AC|/|C|\geq |BC|/|C|$. But from the order formula this is the same thing as $|A|/|A\cap C|\geq |B|/|B\cap C|$, as claimed.
\end{proof}

%\begin{lem}\label{lem:modularaddN} Let $N$ be a normal subgroup of a finite group $G$ with $|N|$ a prime $p$. Let $A$ and $B$ be subgroups of $G$ such that $N\leq A$ and $N\not\leq B$. We have that $|A\cap NB:A\cap B|=p$.
%\end{lem}
%\begin{proof} By the modular law, $A\cap NB=N(A\cap B)$, and the result follows by the second isomorphism theorem, since $N\cap A\cap B=1$.
%\end{proof}

\begin{lem}\label{lem:maxlp} Let $A$ and $B$ be subgroups of $G$, and let $N$ be a normal subgroup of $G$. We have that $|AN\cap B:A\cap B|\leq |N:N\cap A|$, with equality if $N\leq B$.
\end{lem}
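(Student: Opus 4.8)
The plan is to realise $AN\cap B$ inside the left coset space $AN/A$ and then simply count. First I would record the elementary identity $|AN:A|=|N:N\cap A|$; this follows immediately from the order formula $|AN|=|A|\cdot|N|/|A\cap N|$ and requires no normality of $A$ in $AN$, only that $N\normal G$ so that $AN$ is genuinely a subgroup. Since $A\leq AN$ we have $A\cap B\leq AN\cap B$, so the index $|AN\cap B:A\cap B|$ makes sense, and the whole strategy is to bound this index by the number of left cosets of $A$ in $AN$.

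Concretely, I would introduce the map $\phi\colon AN\cap B\to AN/A$ sending $d\mapsto dA$, which is well-defined because $AN\cap B\subseteq AN$. The key computation is that $\phi(d_1)=\phi(d_2)$ iff $d_2^{-1}d_1\in A$; but $d_2^{-1}d_1$ already lies in $AN\cap B$, and $(AN\cap B)\cap A=A\cap B$, so the fibre of $\phi$ over any point is a coset of $A\cap B$ in $AN\cap B$. Hence the image of $\phi$ has exactly $|AN\cap B:A\cap B|$ elements, and being a subset of $AN/A$ its size is at most $|AN:A|=|N:N\cap A|$, which is the claimed inequality.

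For the equality clause when $N\leq B$, I would show that $\phi$ is onto. The point is that every left coset of $A$ in $AN$ has a representative lying in $N$: writing a general element as $an$ with $a\in A$ and $n\in N$, normality of $N$ gives $an=(ana^{-1})a$ with $ana^{-1}\in N$, so $anA=(ana^{-1})A$. When $N\leq B$, such a representative lies in $N\leq AN\cap B$ and so is hit by $\phi$; thus $\phi$ is surjective and the two indices coincide.

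I do not anticipate a serious obstacle here. The only subtlety worth flagging is the temptation to treat $AN/A$ as a quotient group, which would require $A\normal AN$ and is false in general; instead one must work with $AN/A$ purely as a left coset space whose cardinality is governed by the order formula. The fibre count and the observation that cosets have representatives in $N$ are the two facts that carry the argument.
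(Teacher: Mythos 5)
Your proof is correct and follows essentially the same route as the paper: both arguments hinge on the identity $(AN\cap B)\cap A=A\cap B$ and on counting cosets of $A$ inside $AN$, with the equality case handled by showing $A(AN\cap B)=AN$ when $N\leq B$. The only cosmetic difference is that you unpack the order-formula estimate (Lemma \ref{lem:orderintersection}) into an explicit fibre count for the coset map $d\mapsto dA$, whereas the paper invokes that lemma directly.
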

\begin{proof} Without loss of generality we may assume that $G=AN$, since we may replace $B$ by $AN\cap B$. Note that $(AN\cap B)\cap A=A\cap B$, and so by Lemma \ref{lem:orderintersection}, $|AN|\cdot |A\cap B|\geq |AN\cap B|\cdot |A|$. Rearranging, we see that $|AN:A|\geq |AN\cap B:A\cap B|$. But $N$ is a normal subgroup, so $|AN:A|=|N:N\cap A|$ which divides $n$. Thus the result holds in general. If $N\leq B$ then $A(AN\cap B)$ contains $A$ and $N$, so $A(AN\cap B)=G$, so we obtain an equality.
\end{proof}

We now find an apparently new criterion for the Ingleton inequality to always hold: if $H_{12}=H_{123}H_{124}$.

When working with the Ingleton inequality, we will need to reference the terms on either side of it. On the left-hand side, call (a) to (e) the terms in the product: (a) is $|H_1|$, (b) is $|H_2|$, (c) is $|H_{34}|$, (d) is $|H_{123}|$, and (e) is $|H_{124}|$. On the right-hand side, use ($\alpha$) to ($\ep$): ($\alpha$) is $|H_{12}|$, ($\beta$) is $|H_{13}|$, ($\gamma$) is $|H_{14}|$, ($\delta$) is $|H_{23}|$ and ($\ep$) is $|H_{24}|$. This makes it easy to see how the two sides of the inequality balance, because we use Roman and Greek letters.

\begin{lem}\label{lem:reducesizeofH1} Let $G$ be a finite group and let $\mathbf H=(H_1,H_2,H_3,H_4)$ be a quadruple of subgroups of $G$. If $(H_1,H_2,H_3,H_4)$ is an Ingleton offender then so is $\bar{\mathbf H}=(\gen{H_{13},H_{14}},H_2,H_3,H_4)$.
\end{lem}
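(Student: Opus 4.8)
The plan is to set $H_1'=\gen{H_{13},H_{14}}$ and to compare the Ingleton inequality for $\bar{\mathbf H}=(H_1',H_2,H_3,H_4)$ factor by factor against the one for $\mathbf H$, using the labels (a)--(e) and ($\alpha$)--($\ep$) introduced above. The starting observation is that $H_1'\leq H_1$, because both generators $H_{13}=H_1\cap H_3$ and $H_{14}=H_1\cap H_4$ already lie in $H_1$. Consequently the three factors (c)$=|H_{34}|$, ($\delta$)$=|H_{23}|$ and ($\ep$)$=|H_{24}|$ do not mention $H_1$ at all and are literally unchanged, so the entire comparison reduces to understanding the seven factors that do involve the first subgroup.

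Next I would pin down those factors. Since $H_1'\leq H_1$ while $H_{13},H_{14}\leq H_1'$, each relevant intersection is squeezed between equal bounds: for instance $H_1'\cap H_3\leq H_1\cap H_3=H_{13}$, whereas $H_{13}\leq H_1'\cap H_3$, forcing $H_1'\cap H_3=H_{13}$; the identical argument gives $H_1'\cap H_4=H_{14}$, $H_1'\cap H_2\cap H_3=H_{123}$ and $H_1'\cap H_2\cap H_4=H_{124}$. Thus factors ($\beta$), ($\gamma$), (d) and (e) are also unchanged, and the only factors that actually move are (a), which shrinks from $|H_1|$ to $|H_1'|$, and ($\alpha$), which shrinks from $|H_{12}|$ to $|H_{1'2}|=|H_1'\cap H_2|\leq|H_{12}|$.

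It therefore suffices to show that passing from $H_1$ to $H_1'$ contracts the left-hand side by at least the factor it contracts the right-hand side by, that is, $|H_1:H_1'|\geq|H_{12}:H_{1'2}|$, equivalently $|H_1'|/|H_1|\leq|H_{1'2}|/|H_{12}|$. This is the crux of the proof and precisely where Lemma \ref{lem:indexreduction} is invoked: applied with the larger group $H_1$, the smaller group $H_{12}$ and the cutting subgroup $H_1'$, it yields $|H_1:H_1\cap H_1'|\geq|H_{12}:H_{12}\cap H_1'|$, which, since $H_1\cap H_1'=H_1'$ and $H_{12}\cap H_1'=H_1'\cap H_2=H_{1'2}$, is exactly the desired bound. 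Multiplying the strict inequality witnessing that $\mathbf H$ is an offender through by the positive factor $|H_1'|/|H_1|$ and then enlarging the right-hand side using this bound converts the failure of Ingleton for $\mathbf H$ into its failure for $\bar{\mathbf H}$. The only genuine obstacle is this clean application of Lemma \ref{lem:indexreduction}; everything else is the double-containment bookkeeping $H_{13},H_{14}\leq H_1'\leq H_1$ and keeping the strict inequality strict across the final multiplication.
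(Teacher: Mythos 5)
Your proof is correct and follows essentially the same route as the paper: observe that only the factors $|H_1|$ and $|H_{12}|$ change, then apply Lemma \ref{lem:indexreduction} with $A=H_1$, $B=H_{12}$ and $C=\gen{H_{13},H_{14}}$ to see that the left-hand side shrinks by at least the factor by which the right-hand side shrinks. Your explicit double-containment check that (d), (e), ($\beta$), ($\gamma$) are unchanged is a welcome elaboration of a step the paper dispatches with ``clearly.''
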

\begin{proof} Write $\bar H_1$ for $\gen{H_{13},H_{14}}$. We calculate the effect of this replacement on the Ingleton inequality. Clearly $H_1\cap H_3=\bar H_1\cap H_3$, so the only affected quantities are $|H_1|$ and $|H_{12}|$, i.e., (a) and ($\alpha$). If $\mathbf H$ is an Ingleton offender already, and the reduction in the left-hand side is at least as large as the reduction in the right-hand side, then $\bar{\mathbf{H}}$ will still be an Ingleton offender. But the change in the left-hand side is $|H_1|/|H_1\cap \bar H_1|$, and the change in the right-hand side is $|H_{12}|/|H_{12}\cap \bar H_1|$. By Lemma \ref{lem:indexreduction}, this means that the left-hand side of the Ingleton inequality drops by at least as much as the right-hand side, so if $\textbf H$ is an Ingleton offender so is $\bar{\textbf H}$, as claimed.
\end{proof}

\begin{rem} This is the specific case that we need in Corollary \ref{cor:H1notcyclic}, but the exact same proof shows that, given an Ingleton offender $(H_1,H_2,H_3,H_4)$, we can always find Ingleton offenders contained in that quadruple until we hit one where
\[ H_1=\gen{H_{12},H_{13}}=\gen{H_{12},H_{14}}=\gen{H_{13},H_{14}},\]
\[ H_2=\gen{H_{12},H_{23}}=\gen{H_{12},H_{24}}=\gen{H_{23},H_{24}},\]
\[ H_3=\gen{H_{13},H_{23}},\quad H_4=\gen{H_{14},H_{24}}.\]
Coupled with certain of the $H_i$ being abelian, this could help exclude certain cases. Note that there is no known Ingleton offender with $H_1$ or $H_2$ abelian, or with $H_3$ or $H_4$ cyclic. However, as found in \cite{bostonnan2020}, the Ingleton offender for $A_4\times A_4$ has $H_3$ and $H_4$ abelian, isomorphic to $C_3\times C_3$.

We prove that $H_1$ and $H_2$ cannot be cyclic in Corollary \ref{cor:H1notcyclic} below. Although the author does not see how to prove in general that $H_3$ cannot be cyclic, it can be seen that it is not a cyclic group of prime-power order. For if it were, then in a minimal example where $H_3=\gen{H_{13},H_{23}}=H_{13}H_{23}$, one of $H_{13}$ and $H_{23}$ contains the other, without loss of generality $H_{13}\leq H_{23}$. But then $H_3=H_{23}$ and we cannot have $H_3\leq H_2$ by Lemma \ref{lem:generalexclusions} below. This is potentially of value when searching for nilpotent Ingleton offenders.
\end{rem}

\begin{prop}\label{prop:ingoffenderfact} If $\mathbf H=(H_1,H_2,H_3,H_4)$ is a quadruple of subgroups of a finite group $G$ with $H_{12}=H_{123}H_{124}$ then $\mathbf H$ is not an Ingleton offender.
\end{prop}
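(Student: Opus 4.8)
The plan is to use the hypothesis only to eliminate the term $|H_{12}|$ from the Ingleton inequality, reducing it to a purely combinatorial statement about orders that follows from two applications of the order formula. Write $H_{1234}=H_1\cap H_2\cap H_3\cap H_4$ and observe that $H_{123}\cap H_{124}=H_{1234}$, so the order formula applied to the product $H_{123}H_{124}$ gives $|H_{123}H_{124}|=|H_{123}|\cdot|H_{124}|/|H_{1234}|$. Under the hypothesis $H_{12}=H_{123}H_{124}$ this becomes the numerical identity $|H_{12}|\cdot|H_{1234}|=|H_{123}|\cdot|H_{124}|$. I would substitute $|H_{123}|\cdot|H_{124}|=|H_{12}|\cdot|H_{1234}|$ for the product (d)$\cdot$(e) on the left-hand side and cancel the common factor $|H_{12}|$ from both sides. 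The Ingleton inequality is then equivalent to
\[ |H_1|\cdot|H_2|\cdot|H_{34}|\cdot|H_{1234}|\geq |H_{13}|\cdot|H_{14}|\cdot|H_{23}|\cdot|H_{24}|.\]

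It remains to prove this last inequality, and the key idea is to bound the right-hand side by grouping the four intersections according to which $H_i$ contains them. First I would note that $H_{13}=H_1\cap H_3$ and $H_{14}=H_1\cap H_4$ are both subgroups of $H_1$ with $H_{13}\cap H_{14}=H_1\cap H_{34}$, so since $H_{13}H_{14}\subseteq H_1$ the order formula (equivalently Lemma~\ref{lem:orderintersection}) gives $|H_{13}|\cdot|H_{14}|\leq|H_1|\cdot|H_1\cap H_{34}|$. Symmetrically, $H_{23}$ and $H_{24}$ are subgroups of $H_2$ with $H_{23}\cap H_{24}=H_2\cap H_{34}$, giving $|H_{23}|\cdot|H_{24}|\leq|H_2|\cdot|H_2\cap H_{34}|$. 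Multiplying these two bounds controls the right-hand side by $|H_1|\cdot|H_2|\cdot|H_1\cap H_{34}|\cdot|H_2\cap H_{34}|$.

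Finally I would apply the same device one level down: $H_1\cap H_{34}$ and $H_2\cap H_{34}$ are subgroups of $H_{34}$ whose intersection is exactly $H_{1234}$, so $|H_1\cap H_{34}|\cdot|H_2\cap H_{34}|\leq|H_{34}|\cdot|H_{1234}|$. Chaining the three estimates yields $|H_{13}|\cdot|H_{14}|\cdot|H_{23}|\cdot|H_{24}|\leq|H_1|\cdot|H_2|\cdot|H_{34}|\cdot|H_{1234}|$, which is precisely the reduced inequality, so $\mathbf H$ is not an Ingleton offender.

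The argument is short and uses no structural group theory beyond the order formula, so there is no serious obstacle; the only thing to get right is the bookkeeping. In particular one must pair $H_{13}$ with $H_{14}$ inside $H_1$ (and $H_{23}$ with $H_{24}$ inside $H_2$) rather than the more tempting pairing of $H_{13}$ with $H_{23}$ inside $H_3$, since only the former produces intersections with $H_{34}$ that can be collapsed to $H_{1234}$ at the last step. One should also confirm that the hypothesis is genuinely needed: it is what converts the two ``extra'' factors $|H_{123}|\cdot|H_{124}|$ on the left into $|H_{12}|\cdot|H_{1234}|$, supplying exactly the $|H_{1234}|$ that the telescoping on the right demands.
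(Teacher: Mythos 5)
Your proof is correct and is essentially the same as the paper's: the identity $|H_{12}|\cdot|H_{1234}|=|H_{123}|\cdot|H_{124}|$ from the hypothesis, followed by the three order-formula estimates $|H_{13}|\cdot|H_{14}|\leq|H_1|\cdot|H_{134}|$, $|H_{23}|\cdot|H_{24}|\leq|H_2|\cdot|H_{234}|$ and $|H_{134}|\cdot|H_{234}|\leq|H_{34}|\cdot|H_{1234}|$, are exactly the steps in the paper (your $H_1\cap H_{34}$ is the paper's $H_{134}$). No gaps; only the order of presentation differs.
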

\begin{proof} By Lemma \ref{lem:orderintersection}, $|H_1|\cdot |H_{134}|\geq |H_{13}|\cdot |H_{14}|$, since $|H_1|\geq |H_{13}H_{14}|$. Similarly, $|H_2|\cdot |H_{234}|\geq |H_{23}|\cdot |H_{24}|$ and |$H_{34}|\cdot |H_{1234}|\geq |H_{134}|\cdot |H_{234}|$.
Combining these three inequalities yields
\[ |H_1|\cdot |H_2|\cdot |H_{134}|\cdot |H_{234}|\cdot |H_{34}|\cdot |H_{1234}|\geq |H_{13}|\cdot |H_{14}|\cdot |H_{23}|\cdot |H_{24}|\cdot |H_{134}|\cdot |H_{234}|.\]
Cancelling off the identical terms on both sides yields
\[ |H_1|\cdot |H_2|\cdot |H_{34}|\cdot |H_{1234}|\geq |H_{13}|\cdot |H_{14}|\cdot |H_{23}|\cdot |H_{24}|.\]
This holds for all $\mathbf{H}$. If $H_{12}=H_{123}H_{124}$, we have the equality $|H_{12}|\cdot |H_{1234}|=|H_{123}|\cdot |H_{124}|$, which we can swap around and combine with the inequality to yield. 
\[ |H_1|\cdot |H_2|\cdot |H_{34}|\cdot |H_{1234}|\cdot |H_{123}|\cdot |H_{124}|\geq |H_{13}|\cdot |H_{14}|\cdot |H_{23}|\cdot |H_{24}|\cdot |H_{12}|\cdot |H_{1234}|.\]
Cancelling off the $|H_{1234}|$ terms yields the Ingleton inequality.
\end{proof}

\begin{cor}\label{cor:H1notcyclic} If $(H_1,H_2,H_3,H_4)$ is an Ingleton offender then $H_1$ and $H_2$ cannot be cyclic.
\end{cor}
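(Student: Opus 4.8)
The plan is to argue by contradiction, using Lemma \ref{lem:reducesizeofH1} to put $H_1$ into a convenient form and then Proposition \ref{prop:ingoffenderfact} to force a contradiction. Suppose $(H_1,H_2,H_3,H_4)$ is an Ingleton offender with $H_1$ cyclic. By Lemma \ref{lem:reducesizeofH1}, the quadruple $(\gen{H_{13},H_{14}},H_2,H_3,H_4)$ is again an Ingleton offender, and since $\gen{H_{13},H_{14}}\leq H_1$ is a subgroup of a cyclic group it is itself cyclic. Replacing $H_1$ by this subgroup, and noting that the intersections $H_{13}$ and $H_{14}$ are unchanged by this replacement (as observed in the proof of that lemma, $H_1\cap H_3=\gen{H_{13},H_{14}}\cap H_3$, and similarly for $H_4$), I may assume from the outset that $H_1=\gen{H_{13},H_{14}}$ with $H_1$ cyclic.

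Next I would work entirely inside the cyclic group $H_1$. Set $A=H_{12}=H_1\cap H_2$, $B=H_{13}=H_1\cap H_3$ and $C=H_{14}=H_1\cap H_4$; these are all subgroups of $H_1$, and by the reduction $\gen{B,C}=H_1$. The key point is that the subgroup lattice of a cyclic group is distributive, being isomorphic to the divisor lattice of $|H_1|$ with meet given by intersection and join by the product of subgroups. The distributive law therefore gives $A\cap\gen{B,C}=\gen{A\cap B,\,A\cap C}$. The left-hand side equals $A\cap H_1=A=H_{12}$, while the right-hand side equals $H_{123}H_{124}$, using that joins of subgroups of a cyclic group are products and that $A\cap B=H_1\cap H_2\cap H_3=H_{123}$ and $A\cap C=H_{124}$. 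Hence $H_{12}=H_{123}H_{124}$, which is exactly the hypothesis of Proposition \ref{prop:ingoffenderfact}, contradicting the assumption that $(H_1,H_2,H_3,H_4)$ is an Ingleton offender. Thus $H_1$ cannot be cyclic.

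Finally, the Ingleton inequality is symmetric under interchanging $H_1$ and $H_2$: swapping them fixes the product $|H_1|\cdot|H_2|$, fixes $|H_{34}|$, sends $|H_{123}|\cdot|H_{124}|$ to $|H_{213}|\cdot|H_{214}|=|H_{123}|\cdot|H_{124}|$, and permutes the five factors on the right-hand side among themselves. Consequently the identical argument applied with the roles of $H_1$ and $H_2$ reversed shows that $H_2$ cannot be cyclic either.

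I expect the only delicate point to be the reduction step: one must verify that after applying Lemma \ref{lem:reducesizeofH1} the new $H_1$ really is generated by its intersections with $H_3$ and $H_4$, so that $\gen{B,C}=H_1$ holds as needed for the distributive computation; this is precisely the intersection-preservation already recorded in that lemma. Everything else reduces to the distributivity of the subgroup lattice of a cyclic group, equivalently the number-theoretic identity $\gcd(a,\operatorname{lcm}(b,c))=\operatorname{lcm}(\gcd(a,b),\gcd(a,c))$ for the orders $a=|A|$, $b=|B|$, $c=|C|$, together with the fact that in a cyclic group a subgroup is determined by its order.
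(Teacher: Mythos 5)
Your proposal is correct and follows essentially the same route as the paper: reduce via Lemma \ref{lem:reducesizeofH1} to the case $H_1=\gen{H_{13},H_{14}}$, deduce $H_{12}=H_{123}H_{124}$ from the structure of subgroups of a cyclic group, and conclude by Proposition \ref{prop:ingoffenderfact}. The only cosmetic difference is that you phrase the key cyclic-group fact as distributivity of the subgroup lattice (equivalently $\gcd(a,\operatorname{lcm}(b,c))=\operatorname{lcm}(\gcd(a,b),\gcd(a,c))$), whereas the paper states it directly as: if $X=AB$ is cyclic and $C\leq X$ then $C=(A\cap C)(B\cap C)$.
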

\begin{proof} By interchanging $H_1$ and $H_2$ if necessary, assume that $H_1$ is cyclic. Applying Lemma \ref{lem:reducesizeofH1}, if there is an example with $H_1$ cyclic then we can also assume that $H_1=H_{13}H_{14}$. We notice that if $X=AB$ is cyclic and $C\leq X$ then $C=(A\cap C)(B\cap C)$. (This follows easily from the fact that, for cyclic groups, a subgroup is determined by its order, and $|A\cap B|=\gcd(|A|,|B|)$ in cyclic groups.) Since $H_1$ is cyclic, $H_{12}=H_1\cap H_2=(H_{13}\cap H_{12})(H_{14}\cap H_{12})=H_{123}H_{124}$, and so we cannot be an Ingleton offender by Proposition \ref{prop:ingoffenderfact}.
\end{proof}

The end this section by proving a number of general criteria that force a given quadruple not to be an Ingleton offender. These conditions were mostly previously known, but we collect them here, with full, group-theoretic proofs, for the reader's convenience.

Note that the Ingleton bound is invariant under swapping $H_1$ and $H_2$, and under swapping $H_3$ and $H_4$.

\begin{lem}\label{lem:generalexclusions} Let $\mathbf{H}=(H_1,H_2,H_3,H_4)$ be a quadruple of subgroups of a finite group $G$. If any of the following hold, then $\mathbf{H}$ is not an Ingleton offender.
\begin{enumerate}
\item Any of $H_{12}$, $H_{13}$, $H_{14}$, $H_{23}$ and $H_{24}$ being trivial.
\item Any $H_i$ is contained in any $H_j$ for $i\neq j$.
\item $H_1H_2$ is a subgroup of $G$.
\end{enumerate}
Consequently, if $H_1$ or $H_2$ is a normal subgroup of $G$, or if $G$ is abelian, then $\mathbf H$ is not an Ingleton violator.
\end{lem}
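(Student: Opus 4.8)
The plan is to verify the Ingleton inequality directly in each case using only the order formula (Lemma~\ref{lem:orderintersection}) together with the auxiliary inequality
\[ |H_1|\cdot|H_2|\cdot|H_{34}|\cdot|H_{1234}|\geq|H_{13}|\cdot|H_{14}|\cdot|H_{23}|\cdot|H_{24}| \]
that holds for every quadruple and was established in the proof of Proposition~\ref{prop:ingoffenderfact} (I will call this the master inequality), and the invariance of the Ingleton bound under the swaps $H_1\leftrightarrow H_2$ and $H_3\leftrightarrow H_4$. The recurring device is that, applying Lemma~\ref{lem:orderintersection} inside a subgroup $X$ to two subgroups $A,B\leq X$ with $A\cap B=C$, one gets $|X|\cdot|C|\geq|A|\cdot|B|$; I will assemble a handful of such inequalities and multiply them so that the product matches the two sides of the Ingleton bound term by term.

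For part (i), the group generated by the two swaps has exactly two orbits on $\{H_{12},H_{13},H_{14},H_{23},H_{24}\}$, namely $\{H_{12}\}$ and $\{H_{13},H_{14},H_{23},H_{24}\}$, so it suffices to treat $H_{12}=1$ and $H_{13}=1$. If $H_{12}=1$ then $H_{123}=H_{124}=H_{1234}=1$, and the master inequality \emph{is} the Ingleton bound. If $H_{13}=1$ then $H_{123}=1$, and the Ingleton bound reduces to $|H_1||H_2||H_{34}||H_{124}|\geq|H_{12}||H_{14}||H_{23}||H_{24}|$, which follows by multiplying $|H_1||H_{124}|\geq|H_{12}||H_{14}|$ (order formula in $H_1$) by $|H_2||H_{34}|\geq|H_2||H_{234}|\geq|H_{23}||H_{24}|$ (order formula in $H_2$, using $H_{234}\leq H_{34}$).

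For part (iii), I would set $P=H_1H_2$, which is a subgroup by hypothesis, so that $|H_1||H_2|=|P||H_{12}|$ and, after cancelling $|H_{12}|$, the Ingleton bound becomes $|P||H_{34}||H_{123}||H_{124}|\geq|H_{13}||H_{14}||H_{23}||H_{24}|$. I prove this by combining three order-formula inequalities: $|P\cap H_3||H_{123}|\geq|H_{13}||H_{23}|$ and $|P\cap H_4||H_{124}|\geq|H_{14}||H_{24}|$ (applied inside the subgroups $P\cap H_3$ and $P\cap H_4$, which contain $H_{13},H_{23}$ and $H_{14},H_{24}$ respectively), and $|P||H_{34}|\geq|P|\cdot|P\cap H_3\cap H_4|\geq|P\cap H_3||P\cap H_4|$ (order formula in $P$, using $P\cap H_3\cap H_4\leq H_{34}$). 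Getting the factor $|P|$ and the intersections with $P$ to line up is the main obstacle, and is precisely where the hypothesis that $H_1H_2$ is a genuine subgroup is essential. Part (ii) then splits by the same symmetry into four containment types. The case $H_1\leq H_2$ gives $H_1H_2=H_2$, hence falls under (iii) (or reduces directly to $|H_2||H_{34}|\geq|H_{23}||H_{24}|$). The case $H_1\leq H_3$ makes the master inequality collapse exactly onto the reduced Ingleton bound after substituting $H_{13}=H_1$ and $H_{1234}=H_{124}$. The case $H_3\leq H_1$ is handled by multiplying $|H_1||H_{34}|\geq|H_3||H_{14}|$ (order formula in $H_1$, since $H_3\cap H_{14}=H_{34}$) by $|H_2||H_{124}|\geq|H_{12}||H_{24}|$. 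The case $H_3\leq H_4$, where $H_{34}=H_3$, follows from multiplying the always-true inequalities $|H_3||H_{123}|\geq|H_{13}||H_{23}|$, $|H_1||H_{124}|\geq|H_{12}||H_{14}|$ and $|H_2|\geq|H_{24}|$; this case deserves care, because the Ingleton bound is not symmetric in the pairs $\{1,2\}$ and $\{3,4\}$ and so cannot be reduced to (iii): the point is that $H_{34}=H_3$ upgrades a generally weaker inequality into the Ingleton bound itself.

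Finally, the consequences are immediate. If $H_1$ or $H_2$ is normal in $G$ then $H_1H_2=H_2H_1$ is a subgroup, so part (iii) applies; and if $G$ is abelian then every $H_i$ is normal, so this is a special case.
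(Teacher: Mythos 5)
Your proof is correct, and the overall skeleton (reduce by the two swaps, then assemble order-formula inequalities whose product matches the Ingleton bound term by term) is the same as the paper's; parts (i), the $H_1\leq H_2$, $H_3\leq H_1$ and $H_3\leq H_4$ containment cases, and the final deductions all match the paper's proof up to cosmetic rearrangement. The genuine divergence is in part (iii). The paper substitutes the set-theoretic order formula for the \emph{products} $H_{13}H_{23}$ and $H_{14}H_{24}$ (which need not be subgroups) and then argues that $H_{13}H_{23}H_{14}H_{24}$ sits inside $H_1H_2$ while $(H_{13}H_{23})\cap(H_{14}H_{24})$ sits inside $H_{34}$. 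You instead work dually with the \emph{subgroups} $P\cap H_3$ and $P\cap H_4$ for $P=H_1H_2$, and only ever apply Lemma \ref{lem:orderintersection} to genuine subgroups: $|P\cap H_3|\cdot|H_{123}|\geq|H_{13}|\cdot|H_{23}|$ inside $P\cap H_3$, the analogue for $H_4$, and $|P|\cdot|P\cap H_3\cap H_4|\geq|P\cap H_3|\cdot|P\cap H_4|$ inside $P$, with $P\cap H_3\cap H_4\leq H_{34}$. This buys a cleaner argument, since it avoids the product-of-subsets inequality $|X|\cdot|Y|\geq|XY|\cdot|X\cap Y|$ for non-subgroups that the paper's version implicitly relies on, while using the hypothesis that $H_1H_2$ is a subgroup in exactly one place (so that Lemma \ref{lem:orderintersection} applies inside $P$). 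Your treatment of $H_1\leq H_3$ via the intermediate inequality from Proposition \ref{prop:ingoffenderfact}, rather than the paper's appeal to the factorisation criterion $H_{12}=H_{123}H_{124}$ itself, is a minor but equally valid variation.
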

\begin{proof} Recall that we may swap $H_1$ and $H_2$, and swap $H_3$ and $H_4$, without affecting the Ingleton inequality. Thus we need only show that $H_{12}=1$ and $H_{13}=1$ forces $\mathbf H$ to satisfy the Ingleton inequality. Proposition \ref{prop:ingoffenderfact} proves that $\mathbf H$ cannot be an Ingleton offender if $H_{12}=1$, or if $H_3\geq H_1$ (for then $H_{123}=H_{12}$).

If $H_{13}=1$ then so is $H_{123}$, and the Ingleton inequality reduces to
\[ |H_1|\cdot|H_2|\cdot|H_{34}|\cdot |H_{124}|\geq |H_{12}|\cdot|H_{14}|\cdot|H_{23}|\cdot|H_{24}|.\]
We certainly have $|H_1|\cdot |H_{124}|\geq |H_{12}|\cdot |H_{14}|$ and
\[|H_2|\cdot |H_{34}|\geq |H_2|\cdot |H_{234}|\geq |H_{23}|\cdot |H_{24}|,\]
so the result holds.

For the containment statement, by moving around the $H_i$ we need check $H_1\geq H_2$, $H_3\geq H_4$ and $H_1\geq H_3$ (since we have already seen the case $H_3\geq H_1$).
\begin{itemize}
\item If $H_1\geq H_2$ then the Ingleton inequality reduces to
\[ |H_1|\cdot|H_2|\cdot|H_{34}|\cdot|H_{23}|\cdot|H_{24}|\geq |H_{2}|\cdot|H_{13}|\cdot|H_{14}|\cdot|H_{23}|\cdot|H_{24}|.\]
Cancelling off leaves $|H_1|\cdot |H_{34}|\geq |H_{13}|\cdot |H_{14}|$, and this again holds because $|H_{34}|\geq |H_{134}|$.

\item If $H_3\geq H_4$ then $H_{34}=H_4$, and we use $|H_4|\cdot |H_{124}|\geq |H_{14}|\cdot |H_{24}|$ and $|H_1|\cdot |H_{123}|\geq |H_{12}|\cdot |H_{13}|$. Removing these from the Ingleton inequality leaves us left to prove $|H_2|\geq |H_{23}|$, which is obvious.

\item If $H_1\geq H_3$ then the inequality reduces to
\[ |H_1|\cdot|H_2|\cdot|H_{34}|\cdot|H_{23}|\cdot|H_{124}|\geq |H_{12}|\cdot|H_{3}|\cdot|H_{14}|\cdot|H_{23}|\cdot|H_{24}|.\]
By Lemma \ref{lem:indexreduction} we have $|H_1:H_{14}|\geq |H_3:H_{34}|$, so $|H_1|\cdot |H_{34}|\geq |H_3|\cdot |H_{14}|$. Thus if $|H_2|\cdot |H_{124}|\geq |H_{12}|\cdot |H_{24}|$ then we are done, but this is Lemma \ref{lem:orderintersection} again.
\end{itemize}

We now deal with the case where $H_1H_2\leq G$. If $H_1H_2$ is a subgroup of $G$ then $|H_1H_2|=|H_1|\cdot |H_2|/|H_{12}|$, so the Ingleton inequality becomes
\[ |H_1H_2|\cdot|H_{34}|\cdot|H_{123}|\cdot|H_{124}|\geq |H_{13}|\cdot|H_{14}|\cdot|H_{23}|\cdot|H_{24}|.\]
Now, by the order formula again, the \emph{sets} (not necessarily subgroups) $H_{13}H_{23}$ and $H_{14}H_{24}$ have orders $|H_{13}|\cdot|H_{23}|/|H_{123}|$ and $|H_{14}|\cdot |H_{24}|/|H_{124}|$ respectively, so substituting these in yields
\[ |H_1H_2|\cdot|H_{34}|\geq |H_{13}H_{23}|\cdot|H_{14}H_{24}|.\]
Using the order formula again yields
\[ |H_1H_2|\cdot|H_{34}|\geq |H_{13}H_{23}H_{14}H_{24}|\cdot |(H_{13}H_{23})\cap(H_{14}H_{24})|.\]
However, the former of the sets on the right-hand side lies in $H_1H_2$ since $H_1H_2$ is a group, and the latter lies in $H_{34}$, so the inequality holds.
\end{proof}

Note that $H_{34}=1$ is definitely allowed, since $H_{34}=1$ is the case for a number of the known Ingleton offenders, for example the supersoluble ones in Section \ref{sec:supersoluble}. Those examples also have the property that $H_{12}$ has prime order and that $H_{ij}$ is cyclic for all $i,j$. Thus we cannot place more restrictions on the intersections than that they are non-trivial.

\section{Normal subgroups and Ingleton offenders}
\label{sec:primeorder}

If $N$ is a normal subgroup of a finite group $G$, and $G/N$ possesses an Ingleton offender, then by taking full preimages we must obtain an Ingleton offender for $G$, because all subgroup orders are just multiplied by $|N|$. Thus if $H_{1234}$ contains a normal subgroup $N$ of $G$, then we can simply quotient out by it to obtain an Ingleton offender for $G/N$. These Ingleton offenders are not particularly interesting.

In this section we will explore to what extent we can have a normal subgroup $N$ that is a subgroup of \emph{some}, but not necessarily \emph{all}, of the $H_i$ in an Ingleton offender. We will prove that if $N$ is a normal subgroup of $G$ and $(H_1,H_2,H_3,H_4)$ is an Ingleton offender with $N\leq H_i$ for some $i$, then $(NH_1,NH_2,NH_3,NH_4)$ is also an Ingleton offender. This will prove Theorem \ref{thm:removenormal}.

\begin{proof}[Proof of Theorem \ref{thm:removenormal}] Let $N$ be a normal subgroup contained in one of the $H_i$. By permuting either $H_1$ and $H_2$, or $H_3$ and $H_4$ if necessary, we may assume that $N\leq H_2$ or $N\leq H_4$.

First, if $N\leq H_i$ for all $i$, then write $\bar H_i$ for $H_i/N$, and similarly for the intersections. Then $|\bar H_I|=|H_I|/|N|$ for all subsets $I$ and so Ingleton's inequality fails for the $H_i$ if and only if it fails for the $\bar H_i$. Thus the result holds if $N\leq H_{1234}$.

Hence we may assume that $N\not\leq H_1$ or $N\not\leq H_3$ (or both). Assume first that $N\not\leq H_1$, and let $J_1=NH_1$, $J_i=H_i$ for $i=2,3,4$. We claim that the $J_i$ form an Ingleton offender in $G$. To see this, we track the change to the Ingleton inequality caused by replacing the $H_i$ by the $J_i$. Note that $|H_I|=|J_I|$ if $1\not\in I$, and $|J_1|=|H_1|\cdot (|N|/|N\cap H_1|)$, so we only need to consider the change for the $|H_I|$ for $1\in I$. By assumption $N\leq H_i$ for $i=2$ or $i=4$, and in this case, by Lemma \ref{lem:maxlp},
\[ \frac{|J_1\cap J_i|}{|H_1\cap H_i|}=\frac{|H_1N\cap H_i|}{|H_1\cap H_i|}=\frac{|N|}{|N\cap H_1|}.\] Thus the change in the contribution to the inequality for $|H_1|$ and $|H_{1i}|$ are the same, so these can be ignored as well. On the left-hand side, we are left with $H_{123}$ and $H_{124}$, and on the right-hand side we are left with $H_{13}$ and either $H_{12}$ or $H_{14}$. But by Lemma \ref{lem:indexreduction}
\[ \frac{|J_{123}|}{|H_{123}|}=\frac{|J_{13}\cap H_2|}{|H_{13}\cap H_2|}\leq \frac{|J_{13}|}{|H_{13}|}.\]
Similarly $|J_{124}|/|H_{124}|$ is at most $|J_{12}|/|H_{12}|$ and at most $|J_{14}|/|H_{14}|$, so the left-hand side of Ingleton's inequality increases by at most that of the right-hand side upon replacing the $H_i$ by the $J_i$. This proves that the $J_i$ also form an Ingleton offender.

Thus we may assume that $N\leq H_{12}$ but $N\not\leq H_3$. This time set $J_3=NH_3$ and $J_i=H_i$ for $i=1,2,4$. Then the only quantities in Ingleton's inequality that are affected are $|H_{34}|$ and $|H_{123}|$ on the left-hand side, and $|H_{13}|$ and $|H_{23}|$ on the right-hand side. Since $N\leq H_1$, by Lemma \ref{lem:maxlp} (as in the previous case) we have
\[ \frac{|J_{13}|}{|H_{13}|}=\frac{|J_{23}|}{|H_{23}|}=\frac{|J_{123}|}{|H_{123}|}=\frac{|N|}{|N\cap H_3|}.\]
On the other hand, $|J_{34}|/|H_{34}|=|NH_3\cap H_4|/|H_3\cap H_4|\leq |N|/|N\cap H_3|$ by Lemma \ref{lem:maxlp} again, so again the increase in the left-hand side of Ingleton's inequality is at most the increase in the right-hand side. Thus the $J_i$ again form an Ingleton offender.

Thus we have proved that if $N\leq H_i$ for any $i$ then we can increase the number of the $H_i$ containing $N$ and remain being an Ingleton offender. If all $H_i$ contain $N$ then the $H_i/N$ also form an Ingleton offender. This proves the result.
\end{proof}

Using this, we may prove that metacyclic groups are not Ingleton violators.

\begin{proof}[Proof of Theorem \ref{thm:nometacyclic}] Let $G$ be a metacyclic group with normal subgroup $K$ such that $K$ and $G/K$ are cyclic, and proceed by induction on $|G|$. If $\mathbf H$ is an Ingleton offender then $H_1$ cannot be cyclic by Corollary \ref{cor:H1notcyclic}. As $G/K$ is cyclic, $H_1\cap K$ cannot be trivial, whence $H_1$ contains some non-trivial normal subgroup $N$ of $K$, necessarily normal in $G$. But $G/N$ is not an Ingleton violator by induction, so $H_1\cap N=1$ by the previous case analysis. This is a contradiction, so $\mathbf H$ does not exist.
\end{proof}

\begin{rem}In the remaining case, where $N$ intersects all $H_i$ trivially, one cannot do the same analysis. Recall our labelling the terms of Ingleton's inequality by (a) to (e) and ($\alpha$) to ($\ep$). If one writes $\bar H_1=NH_1$ and $\bar H_i=H_i$ for $i=2,3,4$, then (a) increases by a factor of $p$, but (d), (e), ($\alpha$), ($\beta$) and ($\gamma$) all increase by a factor of $1$ or $p$. If (d) or (e) does increase by a factor of $p$ then everything works, but it could be that $\bar H_{123}=H_{123}$ and $\bar H_{124}=H_{124}$.

If one attempts to proceed by setting $\bar H_3=NH_3$ and $\bar H_i=H_i$ for $i=1,2,4$, then one sees that (c), (d), ($\beta$) and ($\delta$) are affected. But a change in $|H_{34}|$ does not, \emph{a priori}, affect either $H_{13}$ or $H_{23}$. If $\bar H_{123}>H_{123}$ then the proof goes through, as with Cases 3 onwards above. So the problem here is that we could have $\bar H_{34}>H_{34}$ while $\bar H_{123}=H_{123}$. Thus in order for the images of the $H_i$ not to form an Ingleton offender, we need that $N\not\leq H_iH_j$ for $\{i,j\}\neq \{3,4\}$ and $N\leq H_3H_4$.

Indeed, this is not a theoretical problem. The supersoluble groups $G$ in Section \ref{sec:supersoluble} have a normal subgroup $N$ of order $p$, and the images of the subgroups in $G/N$ do not form an Ingleton offender. So this method of proof, as attempted in \cite{oggierstancu2012}, cannot obviously prove that nilpotent groups are not Ingleton violators.

One might suggest there is some benefit to the normal subgroup being central rather than just normal, as in the supersoluble case. However, the group $3\times \PSL_2(7)$ is also an Ingleton violator, as we see in Section \ref{sec:smallorder}, whereas no proper subgroup or quotient of that group is an Ingleton violator, so forcing centrality does not help.
\end{rem}

Having proved that, if you have a normal subgroup contained in any $H_i$ then you are just a (not necessarily full) preimage of an Ingleton offender for $G/N$, it makes sense to make the following definition.

\begin{defn} An Ingleton offender $(H_1,H_2,H_3,H_4)$ in a finite group $G$ is \emph{irreducible} if it is a generative Ingleton offender and no $H_i$ contains a non-trivial normal subgroup of $G$. An irreducible Ingleton offender is \emph{indomitable} if, given any non-trivial normal subgroup of $G$, the image of $(H_1,H_2,H_3,H_4)$ in $G/N$ is not an Ingleton offender. We extend our terminology to \emph{irreducible Ingleton violator} and \emph{indomitable Ingleton violator} in the obvious way.
\end{defn}

If trying to prove that groups with a certain property cannot be Ingleton violators, one will only need to search for indomitable offenders. Given full knowledge of the generative Ingleton offenders in proper quotient groups of a finite group $G$, one can construct all generative Ingleton offenders other than the indomitable ones by taking full preimages of the $\bar H_i$ in all Ingleton offenders in $G/N$ and then enumerating all subgroups $H_i$ such that $NH_i/N=\bar H_i$. Although this could be time-consuming, it is significantly better than searching for indomitable Ingleton offenders, where the only known method is to enumerate all subgroups of a group and calculate intersections.

\section{Supersoluble counterexamples}
\label{sec:supersoluble}
We consider a specific subgroup of the Borel subgroup of $\SL_3(p)$: let
\[ u_1=\begin{pmatrix}1&1&0\\0&1&0\\0&0&1\end{pmatrix},\quad u_4=\begin{pmatrix}1&0&0\\0&1&1\\0&0&1\end{pmatrix},\quad t=\begin{pmatrix}\zeta&0&0\\0&1&0\\0&0&\zeta^{-1}\end{pmatrix},\]
where $\zeta$ is a primitive element of $\F_p$. The group $G=\gen{u_1,u_4,t}$ is a supersoluble group of shape $p^{1+2}_+\rtimes (p-1)$. We claim that it is an Ingleton violator for any $p\geq 5$.

We introduce some more elements of $G$: let
\[  u_2=\begin{pmatrix}1&1&\zeta(\zeta-2)/(1-\zeta)\\0&1&-2\zeta\\0&0&1\end{pmatrix}\quad u_3=\begin{pmatrix}1&0&1/(1-\zeta)\\0&1&1\\0&0&1\end{pmatrix},\]
\[ h_2=\begin{pmatrix}\zeta&0&0\\0&1&1\\0&0&\zeta^{-1}\end{pmatrix},\quad h_3=\begin{pmatrix}\zeta&1&0\\0&1&0\\0&0&\zeta^{-1}\end{pmatrix},\]
write $H_1=\gen{u_1,t}$, $H_2=\gen{u_2,h_2}$, $H_3=\gen{u_3,h_3}$ and $H_4=\gen{u_4,t}$.

We first claim that all $H_i$ are Frobenius groups of order $p(p-1)$.

\begin{lem} We have that $t$ normalizes $\gen{u_1}$ and $\gen{u_4}$, that $h_2$ normalizes $\gen{u_2}$ and that $h_3$ normalizes $\gen{u_3}$.
\end{lem}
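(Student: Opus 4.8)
The statement amounts to four conjugation checks, and in each case the relevant cyclic group $\gen{u_i}$ has order $p$, since $u_i-I$ is a strictly upper-triangular (hence nilpotent) matrix and $p\geq 5$. The plan is therefore to show, for each pair, that conjugating $u_i$ by the given torus-type element returns a power of $u_i$, which forces normalisation of $\gen{u_i}$.

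For $t$ the computation is immediate. Writing $E_{ij}$ for the matrix unit with a single $1$ in position $(i,j)$, we have $u_1=I+E_{12}$ and $u_4=I+E_{23}$, with $u_1^k=I+kE_{12}$ and $u_4^k=I+kE_{23}$. Conjugation by the diagonal matrix $\mathrm{diag}(\zeta,1,\zeta^{-1})$ scales $E_{12}$ by $\zeta/1=\zeta$ and $E_{23}$ by $1/\zeta^{-1}=\zeta$, so $t u_1 t^{-1}=u_1^{\zeta}$ and $t u_4 t^{-1}=u_4^{\zeta}$, each lying inside the respective cyclic group. Here $\zeta$ is read as its residue in $\F_p$, which is non-zero, so these are genuine non-trivial powers.

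For $h_2$ and $h_3$ I would reduce to the diagonal case rather than multiply out the awkward $(1,3)$ entries directly. Both $h_2$ and $h_3$ are semisimple with eigenvalues $\zeta,1,\zeta^{-1}$, which are distinct because $\zeta$ is primitive and $p\geq 5$, so each is conjugate to $t$. In fact one can exhibit explicit unipotent conjugators, namely $g_2=u_4^{\zeta/(1-\zeta)}$ and $g_3=u_1^{1/(1-\zeta)}$ (well defined since $1-\zeta\neq 0$), for which $h_2=g_2 t g_2^{-1}$ and $h_3=g_3 t g_3^{-1}$; these identities follow from the relation $t u_4 t^{-1}=u_4^{\zeta}$ of the previous paragraph. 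It then suffices to check that $g_2^{-1}u_2 g_2$ and $g_3^{-1}u_3 g_3$ are normalised by $t$. A short matrix calculation shows that $g_3^{-1}u_3 g_3=u_4$ outright, and that $g_2^{-1}u_2 g_2$ is again a unipotent matrix whose off-diagonal entries are each scaled by a power of $\zeta$ under conjugation by $t$. Reducing to the diagonal case then yields $h_2 u_2 h_2^{-1}=u_2^{\zeta}$ and $h_3 u_3 h_3^{-1}=u_3^{\zeta}$.

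The only real content is the arithmetic hidden in these conjugations: the entries of $u_2$ and $u_3$, with their $1-\zeta$ denominators, were reverse-engineered precisely so that the conjugate closes up inside $\gen{u_i}$, and the main point to verify is the cancellation in the $(1,3)$ entry (where, for $u_2$, the $\binom{\zeta}{2}$ term coming from the fact that $(u_2-I)^2\neq 0$ must be accounted for). The role of the hypothesis $p\geq 5$ is to guarantee that $\zeta\neq\pm 1$ and $2\zeta\neq 0$, so that $1-\zeta$ is invertible, the three eigenvalues are distinct, and each $u_i$ genuinely has order $p$. Anyone preferring a direct route may instead multiply out $h_i u_i h_i^{-1}$ and confirm it equals $u_i^{\zeta}$; the conjugator approach is offered mainly to make these cancellations transparent and to avoid repeating essentially the same elementary-matrix computation four times.
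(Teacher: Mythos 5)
Your proposal is correct, but for $h_2$ and $h_3$ it takes a genuinely different route from the paper. The paper disposes of $t$ by noting that $\gen{u_1,t}$ and $\gen{u_4,t}$ are Borel subgroups of $\SL_2$-Levi subgroups; it handles $h_3$ by conjugating a matrix with an indeterminate $(1,3)$-entry $x$ and solving for the unique $x=1/(1-\zeta)$ for which the conjugate closes up inside the cyclic group (thereby \emph{deriving} the entry of $u_3$ rather than verifying it); and it handles $h_2$ by multiplying out $h_2^{-1}u_2h_2$ and comparing with the explicit formula for $u_2^i$. You instead conjugate the semisimple elements back to the torus: the identities $h_3=g_3tg_3^{-1}$ and $h_2=g_2tg_2^{-1}$ with $g_3=u_1^{1/(1-\zeta)}$ and $g_2=u_4^{\zeta/(1-\zeta)}$ do check out, and they reduce the lemma to $t$ normalising $\gen{g_i^{-1}u_ig_i}$. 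For $h_3$ this is especially clean, since $g_3^{-1}u_3g_3=u_4$, so the claim follows from the already-settled $t$-case and incidentally exhibits $H_3$ as the $g_3$-conjugate of $H_4$. For $h_2$ one gets $g_2^{-1}u_2g_2=v$ with $v=I+E_{12}-\zeta E_{13}-2\zeta E_{23}$, and here your stated justification is too weak as written: \emph{every} upper unitriangular matrix has its off-diagonal entries scaled by powers of $\zeta$ under conjugation by $t$, so that observation alone does not yield normalisation. What is actually needed, and what does hold, is that $v^k=I+kE_{12}-\zeta k^2E_{13}-2\zeta kE_{23}$, so the $(1,3)$-entry is quadratic in $k$ while conjugation by $t$ scales it by $\zeta^2$, giving $tvt^{-1}=v^{\zeta}$; this is precisely the $\binom{\zeta}{2}$-cancellation you flag in your last paragraph, so the issue is one of exposition rather than a real gap. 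On balance the two proofs cost comparable arithmetic: the paper's version explains where the awkward $(1,3)$-entries come from, while yours makes transparent that the whole configuration is a unipotent conjugate of the diagonal picture.
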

\begin{proof} The subgroup generated by $t$ and $u_1$, and $t$ and $u_4$, is a Borel subgroup of an $\SL_2$-Levi subgroup of $\SL_3(p)$, so the result holds for these elements.

We next prove that $h_3$ normalizes $\gen{u_3}$. To see this, we replace the $(1,3)$-entry by an arbitrary $x$ and conjugate by $h_3$:
\[ \begin{pmatrix}\zeta^{-1}&-\zeta^{-1}&0\\0&1&0\\0&0&\zeta\end{pmatrix}\begin{pmatrix}1&0&x\\0&1&1\\0&0&1\end{pmatrix}\begin{pmatrix}\zeta&1&0\\0&1&0\\0&0&\zeta^{-1}\end{pmatrix}=\begin{pmatrix}1&0&\zeta^{-2}(x-1)\\0&1&\zeta^{-1}\\0&0&1\end{pmatrix}.\]
To lie in the subgroup generated by the middle matrix in the conjugation, we must have that the ratio $\zeta^{-2}(x-1)/\zeta^{-1}$ is equal to $x$. Rearranging, we obtain $x=1/(1-\zeta)$ as the unique possible value. Thus $H_3$ is a Frobenius group of order $p(p-1)$.

Finally, we conjugate the matrix $u_3$ by the matrix $h_2$:
\[ \begin{pmatrix}\zeta^{-1}&0&0\\0&1&-\zeta\\0&0&\zeta\end{pmatrix}\begin{pmatrix}1&1&\zeta(\zeta-2)/(1-\zeta)\\0&1&-2\zeta\\0&0&1\end{pmatrix}\begin{pmatrix}\zeta&0&0\\0&1&1\\0&0&\zeta^{-1}\end{pmatrix}=\begin{pmatrix}1&\zeta^{-1}&1/\zeta(\zeta-1)\\0&1&-2\\0&0&1\end{pmatrix}.\]
It is easy to see that
\[ u_2^i=\begin{pmatrix}1&i&i\alpha-i(i-1)\zeta\\0&1&-2i\zeta\\0&0&1\end{pmatrix},\]
where $\alpha=\zeta(\zeta-2)/(1-\zeta)$ is the $(1,3)$-entry of $u_2$. We just need to check that the conjugate is the $\zeta^{-1}$th power of $u_2$. But substituting $i=\zeta^{-1}$ indeed yields the conjugate matrix, so $h_2$ does normalize $\gen{u_2}$, as claimed.
\end{proof}

We now consider the intersections of the $H_i$. Visibly no two $H_i$ intersect in a subgroup containing an element of order $p$, so their intersections are all groups of order dividing $p-1$.

\begin{lem} The intersections $H_{13}$, $H_{14}$, $H_{23}$, $H_{24}$ are all of order $p-1$.
\end{lem}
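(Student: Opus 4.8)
The plan is to exploit the remark just made: since no two of the $H_i$ share a non-trivial element of order $p$, each intersection $H_{ij}$ is a group whose order divides $p-1$. It therefore suffices to exhibit in each of $H_{13}$, $H_{14}$, $H_{23}$ and $H_{24}$ a cyclic subgroup of order $p-1$; any such subgroup already meets the divisibility bound, so the intersection must equal it and have order exactly $p-1$. Recall that each $H_i$ is Frobenius of order $p(p-1)$ with cyclic complement of order $p-1$, generated by $t$ for $H_1$ and $H_4$, by $h_2$ for $H_2$, and by $h_3$ for $H_3$.

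Three of the four cases reduce to spotting a single shared complement. For $H_{14}$ this is immediate: $t$ lies in both $H_1=\gen{u_1,t}$ and $H_4=\gen{u_4,t}$ by definition, so $\gen t\leq H_{14}$. For $H_{13}$ and $H_{24}$ I would verify the two matrix identities
\[ h_3=u_1t,\qquad h_2=u_4^{\zeta}t,\]
each a one-line multiplication of the explicit matrices. The first shows $h_3\in H_1$, whence $\gen{h_3}\leq H_1\cap H_3$; the second shows $h_2\in H_4$, whence $\gen{h_2}\leq H_2\cap H_4$. In each case the cyclic group produced has order $p-1$, so the intersection does too.

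The remaining case $H_{23}$ is the real work, because $\gen{h_2}$ and $\gen{h_3}$ are distinct tori and the single-element shortcut used above is no longer available. Here I would compute the general elements $u_2^ah_2^b$ and $u_3^{a'}h_3^{b'}$ as explicit upper-triangular matrices, using the formula for $u_2^i$ already recorded, the power $u_3^{a'}=\begin{pmatrix}1&0&a'/(1-\zeta)\\0&1&a'\\0&0&1\end{pmatrix}$, and the geometric-series expressions for the powers of $h_2$ and $h_3$, and then solve $u_2^ah_2^b=u_3^{a'}h_3^{b'}$. Comparing the $(1,1)$-entries forces $b'=b$, the $(1,2)$-entries force $a=(\zeta^b-1)/(\zeta-1)$, and the $(2,3)$-entries then force $a'=-\zeta a$. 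With these values the only condition left is equality of the $(1,3)$-entries, which after clearing $\zeta^{-b}$ collapses to the single scalar identity $\alpha+\zeta=-\zeta\beta$, where $\alpha=\zeta(\zeta-2)/(1-\zeta)$ and $\beta=1/(1-\zeta)$. This is exactly the relation that the $(1,3)$-entries of $u_2$ and $u_3$ were chosen to satisfy, and it checks out directly. Thus for each $b\in\{0,\dots,p-2\}$ one obtains a genuine common element, and these are pairwise distinct since their $(1,1)$-entries $\zeta^b$ are distinct; hence $|H_{23}|\geq p-1$, and so $|H_{23}|=p-1$.

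The main obstacle is this last computation. The cases $H_{13}$, $H_{14}$ and $H_{24}$ are essentially free once the shared complement is written down, whereas $H_{23}$ forces one to carry the off-diagonal $(1,3)$-entries through the product and to rely on the specific, otherwise opaque values of $\alpha$ and $\beta$. Verifying the identity $\alpha+\zeta+\zeta\beta=0$ is precisely what makes the construction work, and keeping the arithmetic in $\F_p$ straight—especially the geometric sums arising from the powers of $h_2$ and $h_3$—is where care is needed.
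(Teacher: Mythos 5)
Your proposal is correct and follows essentially the same route as the paper: both arguments use the observation that each $H_{ij}$ has order dividing $p-1$ and then exhibit explicit common elements, with $t\in H_{14}$, $h_3=u_1t\in H_{13}$, $h_2\in H_{24}$ (the paper factors it as $tu_4$ rather than $u_4^{\zeta}t$, both of which are valid), and a matrix computation for $H_{23}$. The only difference is that for $H_{23}$ the paper simply exhibits the single element $u_2h_2=h_3u_3^{-1}$ of order $p-1$, whereas you solve $u_2^ah_2^b=u_3^{a'}h_3^{b'}$ in general; your reduction to the identity $\alpha+\zeta=-\zeta\beta$ checks out, so this is just a more thorough version of the same calculation.
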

\begin{proof} We simply give an element of order $p-1$ that is in the intersection of each pair of groups.
\begin{enumerate}
\item Of course $t\in H_1\cap H_4$, so $|H_1\cap H_4|=p-1$.
\item The element $h_3$ is in $H_3$ by definition and in $H_1$ as $h_3=u_1t$, so $|H_1\cap H_3|=p-1$. Similarly, the element $h_2$ lies in $H_2$ by definition and in $H_4$ as $h_2=tu_4$, so $|H_2\cap H_4|=p-1$.
\item For $H_2\cap H_3$, we claim it contains the element
\[ \begin{pmatrix} \zeta&1&1/(\zeta-1)\\0&1&-1\\0&0&\zeta^{-1}\end{pmatrix}.\]
This element is $u_2h_2$ and $h_3u_3^{-1}$, so $|H_2\cap H_3|=p-1$.
\end{enumerate}
This proves all of the claimed intersections.
\end{proof}

The last two intersections are $H_{12}$ and $H_{34}$.

\begin{lem} We have that $H_{34}=1$ and $|H_{12}|=2$, generated by\[ x=\begin{pmatrix}-1&1/(1-\zeta)&0\\0&1&0\\0&0&-1\end{pmatrix}.\]
\end{lem}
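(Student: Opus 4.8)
The plan is to treat the two claims separately, since $H_{34}=1$ follows from a clean eigenspace argument while $H_{12}$ needs a direct computation. Throughout I would use the observation made just before the statement: every element of $H_i\cap H_j$ has order dividing $p-1$, and since $H_i\cap H_j$ sits inside the Frobenius group $H_i$, each non-identity element of the intersection is semisimple with eigenvalues $\zeta^b,1,\zeta^{-b}$ for some $b\neq 0$ (read off the diagonal, using upper-triangularity in $G$). So in both cases it suffices to understand the semisimple elements of each $H_i$.

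For $H_{34}=1$ I would exploit the eigenvalue-$1$ eigenvector. Writing each semisimple element of $H_4$ as $u_4^c t^b$ (using $tu_4t^{-1}=u_4^{\zeta}$ to absorb conjugates), one checks that its eigenvalue-$1$ eigenvector is $e_2$, and since $u_4$ fixes $e_2$ this line is common to every semisimple element of $H_4$. For $H_3$ one computes that the eigenvalue-$1$ eigenvector of $h_3$ spans $\gen{(1,1-\zeta,0)}$, and because $u_3$ fixes this vector (its third coordinate is $0$) the line is again common to all semisimple elements of $H_3$. As $\gen{e_2}\neq\gen{(1,1-\zeta,0)}$, no semisimple element can lie in both groups, whence $H_{34}=1$.

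For $H_{12}$ the same idea fails: both $u_1$ and $u_2$ move $e_2$, so the eigenvalue-$1$ lines of $H_1$ and $H_2$ sweep out the whole plane $\gen{e_1,e_2}$, which is precisely why the intersection is non-trivial. Instead I would compute directly. An element of $G$ lies in $H_1$ exactly when its $(2,2)$-entry is $1$ and its $(1,3)$- and $(2,3)$-entries vanish. So I would take the general element $u_2^a h_2^b$ of $H_2$, write out its matrix using the displayed power formula for $u_2^a$ together with the (easily computed) form of $h_2^b$, and impose these membership conditions. The $(2,3)$-entry condition solves for $a$ in terms of $\lambda=\zeta^b$; substituting into the $(1,3)$-entry condition and simplifying—where the cancellation $\alpha+\zeta=\zeta/(\zeta-1)$ is the key identity—collapses everything to the single equation $\lambda=-1$. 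This forces $b=(p-1)/2$ and then $a=1/(1-\zeta)$, recovering exactly $x$; the only other solution is $a=b=0$, the identity. Since $x^2=I$, this gives $H_{12}=\gen{x}$ of order $2$.

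The main obstacle is the algebraic simplification of the $(1,3)$-entry condition for $H_{12}$: it looks quadratic in $a$, but after using the $(2,3)$-relation it reduces to a linear condition on $\lambda$, and carrying the constant $\alpha=\zeta(\zeta-2)/(1-\zeta)$ through the cancellation is where care is required. I would also record explicitly that $p$ being odd (guaranteed by $p\geq 5$) is used twice: to invert $2$ when solving for $a$, and to ensure $-1\neq 1$ so that $b=(p-1)/2$ really yields an involution distinct from the identity.
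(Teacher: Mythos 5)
Your proof is correct, and the $H_{12}$ half is in substance identical to the paper's: the paper equates a general element $t^iu_1^j$ of $H_1$ with a general element $h_2^iu_2^k$ of $H_2$ and reads off the resulting entry equations, which are exactly the vanishing conditions on the $(1,3)$- and $(2,3)$-entries that you impose on $u_2^ah_2^b$; both routes land on $k=1/(\zeta-1)$ and $\zeta^{-i}=-1$, hence $i=(p-1)/2$ and the involution $x$. Where you genuinely diverge is $H_{34}$. The paper's argument is a one-line entry computation: every element of $H_4$ has vanishing $(1,2)$-entry, the elements of $H_3$ with that property are exactly $\gen{u_3}$, and $u_3\notin H_4$; this handles unipotent and semisimple elements uniformly and needs no prior input. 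Your fixed-line argument (the $1$-eigenspace of every non-identity semisimple element of $H_4$ is $\gen{e_2}$, while for $H_3$ it is $\gen{(1,1-\zeta,0)}$) is also valid, and arguably more conceptual --- it explains \emph{why} $H_3$ and $H_4$ are disjoint (they are point stabilizers of different lines) and, as you observe, why the same mechanism cannot separate $H_1$ from $H_2$. The one thing you must import is the paper's preceding remark that no $H_i\cap H_j$ contains an element of order $p$, since your eigenspace argument only sees the semisimple elements; the paper's version does not need this. Both proofs are short and complete, so the choice is a matter of taste.
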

\begin{proof} To prove that $H_{34}=1$ is easy. All elements of $H_3$ have $(1,2)$-entry equal to $0$, so anything in the intersection $H_3\cap H_4$ must have this property. All elements of the normal subgroup $\gen{u_3}$ has $(1,2)$-entry equal to $0$, whereas $h_3$ does not; thus the only elements $h_3^iu_3^j$ that can lie in $H_3$ have $i=0$. Hence $H_3\cap H_4\leq \gen{u_3}$, but $u_3\notin H_4$ clearly, so the result holds.

For $H_1\cap H_2$, we claim that the intersection is generated by $x$, as in the statement. Suppose that some element of $H_1$ is equal to some element of $H_2$. Then we have that $t^iu_1^j=h_2^iu_2^k$ for some $i,j,k$. Then $u_1^j=t^{-i}h_2^iu_2^k$. We can compute $t^{-i}h_2^i$, so we obtain
\[ \begin{pmatrix} 1&0&0\\0&1&(\zeta^{-i}-1)/(\zeta^{-1}-1)
\\ 0&0&1\end{pmatrix}\begin{pmatrix}1&k&k\alpha-k(k-1)\zeta\\0&1&-2k\zeta\\0&0&1\end{pmatrix}=\begin{pmatrix}1&j&0\\0&1&0\\0&0&1\end{pmatrix},\]
where $\alpha=\zeta(\zeta-2)/(1-\zeta)$ is the $(1,3)$-entry of $u_2$. The top row of this yields $k=j$ and $k\alpha-k(k-1)\zeta=0$, which yields $k=0$ or $k=1/(\zeta-1)$. The case $k=0$ yields $t^i=h_2^i$, so $i=0$. The case $k=1/(\zeta-1)$ yields
\[ \begin{pmatrix} 1&0&0\\0&1&(\zeta^{-i}-1)/(\zeta^{-1}-1)
\\ 0&0&1\end{pmatrix}\begin{pmatrix}1&1/(\zeta-1)&0\\0&1&-2\zeta/(\zeta-1)\\0&0&1\end{pmatrix}=\begin{pmatrix}1&1/(\zeta-1)&0\\0&1&0\\0&0&1\end{pmatrix},\]
This yields $\zeta^-i=-1$, so $i=(p-1)/2$. This gives us an element of order $2$, and indeed if $m=(p-1)/2$ and $k=1/(\zeta-1)$, then $t^mu_1^k=h_2^mu_2^k$ is the claimed matrix $x$ above.
\end{proof}

Since $x\notin \gen{h_3}=H_1\cap H_3$ and $x\notin \gen{t}=H_1\cap H_4$, we see that $H_{123}=H_{124}=1$. We are now in a position to check the Ingleton inequality. The left-hand side is $p^2(p-1)^2$, and the right-hand side is $2(p-1)^4$. We have that $p^2\leq 2(p-1)^2$ whenever $p\geq 5$, so the quadruple is an Ingleton offender.

\medskip

We can define the subgroups above for an arbitrary $q$, not just $p$. The case $q=4$ yields a group of order $2^6\cdot 3=192$, and is \texttt{SmallGroup(192,1023)}. This will appear as an Ingleton violator in the next section. However, if we work with prime powers $q$ rather than primes $p$, the corresponding group $G$ is no longer supersoluble, so we restrict ourselves to the prime case.

%\newpage
%
%
%
%
% and note that $H_1$ and $H_4$ are Frobenius groups of order $p(p-1)$, and $h_3\in H_1$ and $h_2\in H_4$. We need two more elements of order $p$, $x$ and $u_3$, and then we will set $H_2=\gen{h_2,x}$ and $H_3=\gen{h_3,y}$. We then immediately see that $H_1\cap H_4$, $H_1\cap H_3$ and $H_2\cap H_4$ are all of order $p-1$. We need to then prove that $H_1\cap H_2$ has order $2$, that $H_2\cap H_3$ also has order $p-1$, and that $H_3\cap H_4=1$. We will then have that
%\[ |H_1|\cdot |H_2|\cdot |H_{34}|\cdot |H_{123}|\cdot |H_{124}|=p(p-1)\cdot p(p-1)\cdot1\cdot1\cdot1=p^2(p-1)^2,\]
%whereas
%\[ |H_{12}|\cdot |H_{13}|\cdot |H_{14}|\cdot |H_{23}|\cdot |H_{24}|=2(p-1)^4.\]
%Thus $(H_1,H_2,H_3,H_4)$ will be an Ingleton offender.
%
%
%Let
%
%\begin{lem} We have that $H_3\cap H_4=1$.
%\end{lem}
%\begin{proof}
%This is easy. All elements of $H_3$ have $(1,2)$-entry equal to $0$, so anything in the intersection $H_3\cap H_4$ must have this property. All elements of the normal subgroup $\gen{u_3}$ has $(1,2)$-entry equal to $0$, whereas $h_3$ does not; thus the only elements $h_3^iu_3^j$ that can lie in $H_3$ have $i=0$. Hence $H_3\cap H_4\leq \gen{u_3}$, but $u_3\notin H_4$ clearly, so the result holds.
%\end{proof}

\section{Ingleton offenders of small order}
\label{sec:smallorder}
In \cite{bostonnan2020} a number of examples of Ingleton violators and offenders in groups of small order are given, such as $A_4\times A_4$ and a group $2^4\rtimes F_{20}$, a split extension of an elementary abelian group of order $16$ by a Frobenius group of order $20$, but complete details of exactly what groups were found by Boston and Nan are not given there.

Here we provide a systematic study of Ingleton violators of small order. In particular, in Table \ref{tab:indingvios} we list all indomitable Ingleton violators of order at most $1023$ (the groups of order $1024$ are not publicly available at the time of writing). These were, of course, found with a computer. There are a few groups that are irreducible Ingleton violators but not indomitable Ingleton violators, and these are listed in Table \ref{tab:irringvios}. In the tables we provide the designation in the SmallGroups database, together with either their name (if they have one) or a description of their structure, the latter of which need not uniquely determine $G$ in complicated cases, but gives the reader an idea as to their structure. (We do not do this for the eight indomitable Ingleton violators of order $768$ because they have a very complicated structure and it is not enlightening.)

\begin{table}
\begin{center}
\begin{tabular}{ccc}
\hline $n$ & SmallGroup identifier & Standard name/description
\\\hline  $120$ & $34$ & $S_5$
\\ $144$ & $184$ & $A_4\times A_4$
\\ $192$ & $1023$ & Borel of $\PSL_3(4)$
\\ $288$ & $1024$, $1025$, $1026$ & $A_4\times S_4$, $A_4\wr 2$, $(A_4\times A_4).2$
\\ $320$ & $1635$ & $2^4\rtimes F_{20}$
\\ $324$ & $160$ & $3^3\rtimes A_4$
\\ $336$ & $208$ & $\PGL_2(7)$
\\ $360$ & $118$, $120$ & $A_6$, $(3\times A_5)\cdot 2$
\\ $384$ & $18135$, $18235$ & Borel of $\PSL_3(4).2\leq \mathrm{P\Gamma L}_3(4)$, $2^{2+4}\rtimes 6$
\\ $480$ & $218$ & $\GL_2(5)$
\\ $500$ & $23$ & $5^{1+2}_+\rtimes 4$
\\ $504$ & $156$, $157$ & $\PSL_2(8)$, $3\times \PSL_2(7)$
\\ $576$ & $5129$, $8652$, $8653$, $8654$, & $2^2.(A_4\times A_4)$, $(A_4\times A_4).4$, $S_4\times S_4$, $(A_4\times A_4).2^2$
\\ & $8655$, $8656$, $8657$ & $2\times A_4\times S_4$, $2\times (A_4\wr 2)$, $2\times (A_4\times A_4).2$
\\ $600$ & $151$ & $5^2\rtimes(S_3\times 4)$
\\ $640$ & $21536$ & $2\times (5_+^{1+2}\rtimes 4)$
\\ $648$ & $703$, $704$, $705$ & $3^3\rtimes S_4$, $3^3\rtimes S_4$, $3^3\rtimes (2\times A_4)$
\\ $660$ & $13$ & $\PSL_2(11)$
\\ $672$ & $1257$ & $2^5\rtimes F_{21}$
\\ $720$ & $763$ to $768$ & $S_6$, $\PGL_2(9)$, $M_{10}$, $A_6\times 2$, $A_4\times A_5$
\\ $768$ & $1084517$, $1084555$, $1085079$, $1085112$, & -
\\ & $1088651$, $1088660$, $1090131$, $1090134$ & -
\\ $864$ & $4675$, $4677$, $4679$& $(3\times A_4).2\times A_4$, $(3\times A_4^2).2$, $(3\times A_4^2).2$
\\ $882$ & $34$ & $7^2\rtimes(S_3\times 3)$
\\ $960$ & $11357$, $11358$, $11359$ & $2^4\rtimes A_5$, $2^4\rtimes A_5$, $2^4\rtimes( 15\rtimes 4)$
\\ $972$ & $877$ & $(3^3\rtimes A_4)\times 3$
\\ $1000$ & $91$ & $5^{1+2}_+\rtimes (2\times 4)\leq \text{ Borel of }\SL_3(5)$
\\ $1008$ & $882$ & $3\times \PGL_2(7)$
\\ \hline
\end{tabular}
\end{center}
\caption{All indomitable Ingleton violators of order up to $1023$}
\label{tab:indingvios}
\end{table}

\begin{table}
\begin{center}
\begin{tabular}{ccc}
\hline $n$ & SmallGroup identifier & Standard name/description
\\\hline $384$ & $5863$ & Borel of $2\cdot \PSL_3(4)$
\\ $576$ & $8277$, $8282$ & $2\cdot(A_4\wr C_2)$, $2\cdot(A_4\times A_4).2$
\\ $768$ & $1083945$ & Borel of $4_b.\PSL_3(4)$
%\\ $768$ & $1084476$ & ??
\\ $864$ & $4680$ & $S_3\times A_4\times A_4$
\\ $960$ & $10871$ & $D_8\times S_5$
\\ \hline
\end{tabular}
\end{center}
\caption{All irreducible Ingleton violators of order up to $1023$ that are not indomitable Ingleton violators}
\label{tab:irringvios}
\end{table}

Many of these have large numbers of conjugacy classes of indomitable Ingleton offenders. We give the classes and scores of the offenders in the groups of order up to $504$ (above that it starts getting unwieldy to present them in tabular form) in Table \ref{tab:ingoffs}.

One example we referred to earlier was $3\times \PSL_2(7)$, which was interesting because it has a central subgroup and an Ingleton offender, but $\PSL_2(7)$ is not an Ingleton violator. We give the structure of one Ingleton offender now.

\begin{example} Let $G=3\times \PSL_2(7)$, generated by $(1,2,3)$ for the one factor, and $(6,9,10)(7,8,11)$ and $(4,11,5)(7,8,9)$ in the other. (These are the default generators given by Magma.) Let 
\[g=(1,2,3)(6,9,10)(7,8,11),\quad h=(1,2,3)(4,10,5)(6,9,7),\quad  x=(4,11)(5,10)(6,9)(7,8),\]
and let
\[ H_1=\gen{g,x},\quad H_2=\gen{h,x},\quad H_3=\gen{g,h^x},\quad H_4=\gen{h,g^x}\]
Then $H_1,H_2\cong A_4$, $H_3,H_4\cong F_{21}$, and the intersections $H_i\cap H_j$ have order $3$ except for $H_1\cap H_2=\gen x$ and $H_3\cap H_4=1$. The quadruple $(H_1,H_2,H_3,H_4)$ is an Ingleton offender, but the image modulo $Z(G)$ is not.
\end{example}

\begin{table}
\begin{center}
\begin{tabular}{cccc}
\hline Violator & No.\ of Offender classes & Ratios & Scores
\\ \hline $S_5$ & $1$ & $16/15$ & $0.01348$
\\ $A_4\times A_4$ & $1$ & $9/8$ & $0.02370$
\\ $\PSL_3(4)$ Borel & $48$ & $9/8$ & $0.02240$
\\ $A_4\times S_4$ & $1$ & $9/8$ & $0.02080$
\\ $A_4\wr C_2$ & $7$ & $9/8$ & $0.02080$ (6 times), $0.02370$
\\ $(A_4\times A_4).2$ & $7$ & $9/8$ & $0.02080$ (6 times), $0.02370$
\\ $2^4\rtimes F_{20}$ & $1$ & $32/25$ & $0.04280$
\\ $3^3\rtimes A_4$ & $2$ & $9/8$ & $0.02038$
\\ $\PGL_2(7)$ & $1$ & $8/7$ & $0.02295$
\\ $A_6$ & $32$ & $9/8$ (16 times)  & $0.02001$ (12 times), $0.02268$ (4 times)
\\ & & $16/15$ (16 times) & $0.01096$ (14 times), $0.01348$ (2 times)
\\ $(3\times A_5).2$ & $8$ & $16/15$ (6 times) & $0.01096$ (6 times)
\\ & & $6/5$ (2 times) & $0.03097$ (2 times)
\\ $\PSL_3(4).2$ Borel &$56$ & $9/8$ & $0.01979$ (52 times), $0.02240$ (2 times)
\\ $2^{2+4}\rtimes 6$ & $16$ & $9/8$ & $0.01979$
\\ $\GL_2(5)$ & 4 & $16/15$ & $0.01045$
\\ $5_+^{1+2}\rtimes 4$ & $60$ & $32/25$ & $0.03972$
\\ $\PSL_2(8)$ & $1$ & $7/6$ & $0.02477$
\\ $3\times \PSL_2(7)$ & $2$ & $9/8$ & $0.01892$
\\ \hline
\end{tabular}
\end{center}
\caption{Ingleton offenders and scores for the Ingleton violators in Table \ref{tab:indingvios} of order at most $504$}
\label{tab:ingoffs}
\end{table}

Looking at Table \ref{tab:indingvios}, one has two obvious questions:

\begin{question}
\begin{enumerate}
\item Are there any nilpotent Ingleton violators?
\item Are there any Ingleton violators of odd order?
\end{enumerate}
\end{question}

In both situations, since the collection of groups is closed under subgroups and quotients, any Ingleton violator of smallest order possesses only indomitable Ingleton offenders. Call an Ingleton violator \emph{minimal} if no proper subgroup contains an Ingleton offender. When searching with a computer, we can obtain stronger restrictions on minimal Ingleton violators.

\begin{prop} If $\mathbf H=(H_1,H_2,H_3,H_4)$ is an Ingleton offender in a minimal Ingleton violator $G$ then $G=\gen{H_i,H_j}$ for any $i\neq j$. Furthermore, if $\{i,j\}\neq \{3,4\}$, then for all $k\neq i,j$, $G=\gen{H_k,H_{ij}}$.
\end{prop}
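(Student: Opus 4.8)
The plan is to exploit the reduction recorded in the remark following Lemma~\ref{lem:reducesizeofH1}. Starting from $\mathbf H$, repeated application of Lemma~\ref{lem:reducesizeofH1} (to $H_1$ and $H_2$) and its analogues (to $H_3$ and $H_4$) produces a quadruple $\bar{\mathbf H}=(\bar H_1,\bar H_2,\bar H_3,\bar H_4)$ with $\bar H_i\leq H_i$ that is still an Ingleton offender and satisfies the displayed generation relations: $\bar H_1=\gen{\bar H_{12},\bar H_{13}}=\gen{\bar H_{12},\bar H_{14}}=\gen{\bar H_{13},\bar H_{14}}$, the symmetric relations for $\bar H_2$, and $\bar H_3=\gen{\bar H_{13},\bar H_{23}}$, $\bar H_4=\gen{\bar H_{14},\bar H_{24}}$. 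Since failure of the Ingleton inequality depends only on the orders of the $\bar H_i$ and their pairwise and higher intersections, $\bar{\mathbf H}$ is an Ingleton offender all of whose members lie in $K=\gen{\bar H_1,\bar H_2,\bar H_3,\bar H_4}$; as $G$ is minimal, $K$ cannot be a proper subgroup, so $\bar{\mathbf H}$ is generative, $K=G$. Finally $\bar H_i\leq H_i$ forces $\bar H_{ij}\leq H_{ij}$, so any generation statement proved for the $\bar H_i$ lifts at once to the original $H_i$. It therefore suffices to prove both claims for $\bar{\mathbf H}$.

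For the first claim I would show that $\gen{\bar H_i,\bar H_j}$ contains all four of $\bar H_1,\dots,\bar H_4$, whence it equals $K=G$. For a ``same side'' pair such as $\{1,2\}$ this is immediate: both $\bar H_3=\gen{\bar H_{13},\bar H_{23}}$ and $\bar H_4=\gen{\bar H_{14},\bar H_{24}}$ have all their generators inside $\gen{\bar H_1,\bar H_2}$. For a mixed pair such as $\{1,3\}$ one argues in two steps: first $\bar H_2=\gen{\bar H_{12},\bar H_{23}}\leq\gen{\bar H_1,\bar H_3}$, and then $\bar H_4=\gen{\bar H_{14},\bar H_{24}}$ lies in $\gen{\bar H_1,\bar H_3}$ because $\bar H_{14}\leq\bar H_1$ and $\bar H_{24}\leq\bar H_2\leq\gen{\bar H_1,\bar H_3}$. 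All remaining pairs follow from these two cases by the symmetries $\bar H_1\leftrightarrow\bar H_2$ and $\bar H_3\leftrightarrow\bar H_4$, after which $\gen{H_i,H_j}\supseteq\gen{\bar H_i,\bar H_j}=G$.

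For the second claim, fix a pair $\{i,j\}\neq\{3,4\}$ and a third index $k$. Since $\{i,j\}\neq\{3,4\}$, at least one index, say $i$, lies in $\{1,2\}$, and for such $i$ the relations give $\bar H_i=\gen{\bar H_{ij},\bar H_{ik}}$. As $\bar H_{ik}\leq\bar H_k$, this yields $\bar H_i\leq\gen{\bar H_{ij},\bar H_k}$, so $\gen{\bar H_k,\bar H_{ij}}\supseteq\gen{\bar H_i,\bar H_k}=G$ by the first claim, and lifting gives $\gen{H_k,H_{ij}}=G$. This is exactly where the exclusion of $\{3,4\}$ enters: no relation writes $\bar H_3$ or $\bar H_4$ as a join involving $\bar H_{34}$, so the argument breaks down there, in agreement with the supersoluble examples of Section~\ref{sec:supersoluble}, where $H_{34}=1$ and $\gen{H_1,H_{34}}=H_1\neq G$.

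The main obstacle lies entirely in the first paragraph: justifying that one can reach a $\bar{\mathbf H}$ satisfying \emph{all} the displayed relations simultaneously while preserving the strict failure of Ingleton. Each individual reduction, for example replacing $H_3$ by $\gen{\bar H_{13},\bar H_{23}}$ or $H_2$ by $\gen{\bar H_{12},\bar H_{23}}$, is an instance of the mechanism of Lemma~\ref{lem:reducesizeofH1}: one checks that the intersections controlling the unaffected terms are unchanged, and that the single affected left-hand term either drops alone (which only helps, since Ingleton already fails) or is balanced against a single right-hand term via Lemma~\ref{lem:indexreduction}. One then verifies that these reductions do not interfere, each leaving fixed the intersections feeding the others' relations, so that iterating to a fixed point yields a quadruple meeting every relation at once. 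Once this bookkeeping is in place, the two claims reduce to the short join computations above.
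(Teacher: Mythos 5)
Your proof is correct, but it takes a genuinely different route from the paper's. The paper argues by contradiction, pair by pair: if $\gen{H_i,H_j}$ (or $\gen{H_k,H_{ij}}$) were contained in a maximal subgroup $M$, it intersects all four subgroups with $M$ and uses Lemma \ref{lem:indexreduction} to check that every possible drop on the right-hand side of the Ingleton inequality is dominated by a corresponding drop on the left-hand side, so the intersected quadruple would still be an Ingleton offender inside $M$, contradicting minimality. You instead front-load all of the inequality bookkeeping into a single saturation step --- iterating Lemma \ref{lem:reducesizeofH1} and its analogues until each $\bar H_i$ is generated by its pairwise intersections, exactly the reduction asserted in the remark following that lemma --- after which minimality forces $\gen{\bar H_1,\bar H_2,\bar H_3,\bar H_4}=G$ and both generation claims become short, purely combinatorial join computations that lift to the original $H_i$ because $\bar H_i\leq H_i$ and $\bar H_{ij}\leq H_{ij}$. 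The trade-off is that the paper's argument relies only on the fully proved Lemma \ref{lem:indexreduction}, whereas yours leans on the remark, which the paper states without a complete proof; your sketch of why each individual replacement preserves the offender property is sound (each replacement fixes the intersections it is built from, and any shrinkage of a right-hand term is dominated, via Lemma \ref{lem:indexreduction}, by the shrinkage of the corresponding $|H_i|$), but the cleanest justification for reaching a simultaneous fixed point is that each non-trivial replacement strictly decreases $|H_1|\cdot|H_2|\cdot|H_3|\cdot|H_4|$, rather than your claim that the reductions ``do not interfere'' --- a replacement of $H_1$ can shrink $H_{14}$ and thereby re-break the relation for $H_4$, so interference does occur and termination is what saves you. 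One further small point: the pair $\{3,4\}$ is not obtained from $\{1,2\}$ or $\{1,3\}$ by the symmetries $\bar H_1\leftrightarrow\bar H_2$, $\bar H_3\leftrightarrow\bar H_4$ that you invoke, so it needs its own (equally immediate) line using $\bar H_1=\gen{\bar H_{13},\bar H_{14}}$ and $\bar H_2=\gen{\bar H_{23},\bar H_{24}}$. Your closing observation that the exclusion of $\{3,4\}$ in the second claim is genuinely necessary, as witnessed by the supersoluble examples with $H_{34}=1$, is a nice sanity check that the paper does not make explicit.
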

\begin{proof} Let $M$ be any maximal subgroup of $G$. By assumption, $M$ is not an Ingleton violator, and so the $\bar H_i=H_i\cap M$ cannot be an Ingleton offender. Notice that if $H_i\leq M$ then $\bar H_I=H_I$ if $i\in I$. Thus when comparing the Ingleton inequalities for the $H_i$ and the $\bar H_i$, the right-hand side must be have reduced by more than the left-hand side in order for the inequality to fail for the $H_i$ and pass for the $\bar H_i$.

If $H_1,H_2\leq M$ then the right-hand side of the inequality remains unchanged, so this contradiction proves that $G=\gen{H_1,H_2}$ already. If $H_3,H_4\leq G$ then only $H_{12}$ can be reduced from the right-hand side. But $|H_{12}:\bar H_{12}|=|H_{12}:H_{12}\cap M|\leq |H_1:H_1\cap M|$ by Lemma \ref{lem:indexreduction}, and so the right-hand side reduces by at most the left-hand side, leading to a contradiction.

If $H_1,H_3\leq M$ then, similarly to the previous case, only $H_{24}$ can reduce from the right-hand side, and by at most that of $H_2$ on the left-hand side. This proves the generation by $H_i$ and $H_j$.

For the stronger statement, first assume that $\{i,j\}=\{1,2\}$. Let $M$ be a maximal subgroup containing $H_3$ and $H_{12}$, and intersect all the $H_i$ with $M$. This cannot reduce $H_{12}$, $H_{13}$ and $H_{23}$, but could reduce $H_{14}$ (but by at most that of $H_1$) and $H_{24}$ (but by at most that of $H_2$). This proves the result for $H_{12}$.

If $\{i,j\}=\{1,3\}$, then first let $M$ be a maximal subgroup of $G$ containing $H_2$ and $H_{13}$. Similarly to above, the only term on the right-hand side that can be reduced is $H_{14}$, by at most that of $H_1$. Thus the result holds here. If $M$ is a maximal subgroup containing $H_4$ and $H_{13}$ then we can get a reduction in $H_{12}$ and $H_{23}$, but not by more than $H_1$ and $H_2$ respectively.
\end{proof}

Because there are very few minimal Ingleton violators, as a proportion of the groups of a given order, when looking for arbitrary Ingleton violators one can proceed inductively, and can assume that a group $G$ is either a minimal Ingleton violator or that it has a subgroup that is a minimal Ingleton violator, and that these are all known by induction. This allows us quickly prove (with a computer) that only a couple of the 1090235 groups of order $768$ can be a minimal Ingleton violator, and so we can concentrate computational resources on the few dozen groups that have Ingleton violator proper subgroups.

There are very few Ingleton violators in general: of the 11759892 groups of order at most $1023$, only $391$ groups are Ingleton violators, and $61$ are irreducible Ingleton violators. Even if one excludes the nilpotent groups from consideration, there are still 1137397 groups of order at most $1023$, so only 0.03\% of non-nilpotent groups of order at most $1023$ are Ingleton violators, and 0.005\% are irreducible Ingleton violators.

Although we proved in Corollary \ref{cor:H1notcyclic} that $H_1$ and $H_2$ cannot be cyclic, and we remarked that $H_3$ and $H_4$ cannot have prime-power order, in general we do not know if $H_3$ and $H_4$ could be cyclic. Certainly $H_3$ and $H_4$ can be abelian, as in the $A_4\times A_4$ case, but we do not know if $H_1$ and $H_2$ can be abelian. We summarize the open questions about Ingleton offenders we have suggested here now.
\begin{enumerate}
\item Are there any nilpotent Ingleton violators?
\item Are there any Ingleton violators of odd order?
\item Are there any Ingleton offenders with $H_1$ abelian?
\item Are there any Ingleton offenders with $H_3$ cyclic?
\item Can all $|H_{ij}|$ have odd order?
\end{enumerate}

\end{document}